\documentclass[letterpaper, 10 pt, conference]{ieeeconf}  % Comment this line out if you need a4paper

\IEEEoverridecommandlockouts % This command is only needed if  you want to use the \thanks command

\overrideIEEEmargins % Needed to meet printer requirements.

%for bold font using mathptmx and many think this is best for math
%\usepackage{bm}
\pdfobjcompresslevel=0

\usepackage{afterpage}
\usepackage{epsfig} % for postscript graphics files
\usepackage{times} % assumes new font selection scheme installed
\usepackage{amsmath} % assumes amsmath package installed
\usepackage{amssymb}  % assumes amsmath package installed
\usepackage{amsfonts}
\usepackage{mathptmx} % assumes new font selection scheme installed
\usepackage{tabularx}
\usepackage{epstopdf}
\usepackage{float}
\usepackage{epic,color}
\usepackage{mathrsfs}
\usepackage{dsfont,MnSymbol}
\usepackage{algorithm}
\usepackage{multirow} % Multirow for tables
\usepackage[T1]{fontenc}

%%%%%%%%%%%%%%%%%%%%%%%%%%%%%%%%%%%%%%%%%%%%%%%%%%%
\newcounter{rmnum}

\newcounter{anum}

%%%%%%%%%%%%%%%%%%%%%%%%%%%%%%%%%%%%%%%%%%%%%%%

%\end{center}}
\def\IEEEQEDclosed{\mbox{\rule[0pt]{1.3ex}{1.3ex}}}

\def\qed{\ifmmode\IEEEQEDclosed\else{\unskip\nobreak\hfil
		\penalty50\hskip1em\null\nobreak\hfil\IEEEQEDclosed
		\parfillskip=0pt\finalhyphendemerits=0\endgraf}\fi}
%%%%%%%%%%%%%%%%%%%%%%%%%%%%%%%%%%%%%%%%%%%%%%%%%%%%%%

%local definitions:
%%%spm macros %%%
\def\qed{\hspace*{\fill}~\IEEEQED\par\endtrivlist\unskip}%\mbox{\rule[0pt]{1.3ex}{1.3ex}}}

\def\Re{\mathbb{R}}

%This is p_n: \def\pxzn{p_{\text{\tiny$ X| Z^n$}}}

 %So we can change symbol easily later

\def\Sec#1{Sec.~\ref{#1}}

\def\notes#1{\marginpar{\tiny #1}\typeout{Notes!
Notes!
Notes!
}}
\renewcommand{\notes}[1]{\typeout{notes!}}
\def\FRAC#1#2#3{\genfrac{}{}{}{#1}{#2}{#3}}
\def\half{{\mathchoice{\FRAC{1}{1}{2}}%
{\FRAC{2}{1}{2}}%
{\FRAC{3}{1}{2}}%
{\FRAC{4}{1}{2}}}}

\def\Re{\field{R}}

\def\Sec#1{Sec.~\ref{#1}}

\def\transpose{{\hbox{\rm\tiny T}}}

\def\clE{{\cal E}}

\def\clP{{\cal P}}
\def\clZ{{\cal Z}}

\def\Sec#1{Sec~\ref{#1}}

 %So we can change symbol easily later

% \def\E{\text{E}}
\def\E{{\sf E}}

\def\Sec#1{Sec.~\ref{#1}}

\def\IEEEQEDclosed{\mbox{\rule[0pt]{1.3ex}{1.3ex}}}
\def\qed{\nobreak\hfill\IEEEQEDclosed}

\def\clZ{{\cal Z}}

%\newcommand{}{def}
%\newcounter{rmnum}
%\newenvironment{romannum}{\begin{list}{{\upshape (\roman{rmnum})}}{\usecounter{rmnum}
%\addtolength{\leftmargin}{-5pt}
%\setlength{\rightmargin}{4pt}
%}}{\end{list}}

% \def\RevComment#1#2{%

%

\newtheorem{example}{Example}
\newtheorem{definition}{Definition}
\newtheorem{lemma}{Lemma}
\newtheorem{remark}{Remark}
\newtheorem{proposition}{Proposition}

\def\beq{\begin{eqnarray}} 
\def\bc{\begin{center}} 
\def\be{\begin{enumerate}}
\def\bi{\begin{itemize}} 
\def\bs{\begin{small}}
\def\bS{\begin{slide}}
\def\ec{\end{center}} 
\def\ee{\end{enumerate}}
\def\ei{\end{itemize}}
\def\es{\end{small}}
\def\eS{\end{slide}}
\def\eeq{\end{eqnarray}}

  %I didn't like a simple "J" -spm

\newcommand{\newP}[1]{\medskip\noindent{\bf #1:}}

\newcommand{\ud}{\,\mathrm{d}}

\def\Re{\mathbb{R}}
\def\E{{\sf E}}

%This is p_n: \def\pxzn{p_{\text{\tiny$ X| Z^n$}}}

\def\clY{{\cal Y}}

%%%

\def\Sec#1{Sec.~\ref{#1}}

\def\Prop#1{Prop.~\ref{#1}}

 %So we can change symbol easily later

\def\clD{{\cal D}}
\def\clE{{\cal E}}

\def\clP{{\cal P}}
\def\clZ{{\cal Z}}

%Wong-Zakai correction term

\renewcommand{\Re}{\mathbb{R}}

\def\FRAC#1#2#3{\genfrac{}{}{}{#1}{#2}{#3}}

  %I didn't like a simple "J" -spm

%\newcommand{\Xit}{\tilde{X}_t}
%\newcommand{\Xii}{\tilde{X}}

%\newcommand{\hatXit}{\hat{\tilde{X}}_t}
%\newcommand{\hatXii}{\hat{\tilde{X}}}

\newcommand{\var}{\text{var}}

%\floatname{algorithm}{Procedure}
%\renewcommand{\algorithmicrequire}{\textbf{Input:}}
%\renewcommand{\algorithmicensure}{\textbf{Output:}}

%\newcommand{N}{{T}^{(\epsilon,N_2)}}
%\newcommand{\Deltaeps}{\frac{1}{\epsilon}(I -e^{-\epsilon\Delta_\rho})}

%%%%%%%%%%%%%%%%%%% JIN CUSTOM SETTINGS %%%%%%%%%%%%%%%%%%%%

\def\clA{{\cal A}}

\def\clD{{\cal D}}
\def\clE{{\cal E}}
\def\clF{{\cal F}}

\def\clN{{\cal N}}
\def\clO{{\cal O}}
\def\clP{{\cal P}}

\def\clS{{\cal S}}

\def\clV{{\cal V}}

\def\clY{{\cal Y}}
\def\clZ{{\cal Z}}
\def\E{{\sf E}}
\def\bS{\mathbb{S}}

\def\ones{{\sf 1}}
\def\sP{{\sf P}}
\def\tsP{{\tilde{\sf P}}}
\def\tE{{\tilde{\sf E}}}

\def\tp{{\hbox{\rm\tiny T}}}

 %null space
 %range space

%\def\bP{\mathbb{P}}

%Deterministic energy and variance, Conditional energy and variance
\def\bmu{{\bar\mu}}

\def\chisq{{\chi^2}}

\def\tv{{\text{\tiny TV}}}

\def\bO{\mathbb{O}}

\def\LL{{\sf L}}
\def\II{{\cal E}}
\def\VV{\var^\rho(Y_0(X_0))}

%%%%%%%%%%%%%%%%%%%%%%%%%%%%%%%%%%%%%%%%%%%%%%%%%%%%%%

\newlength{\noteWidth}
\setlength{\noteWidth}{.75in}
\long\def\notes#1{\ifinner
	{\tiny #1}
	\else
	\marginpar{\parbox[t]{\noteWidth}{\raggedright\tiny #1}}
	\fi}

%for final, uncomment:
%\long\def\notes#1{}
 
\title{\LARGE \bf Backward Map for Filter Stability Analysis}

\author{Jin Won Kim, Anant A. Joshi and Prashant G. Mehta% <-this % stops a space
	\thanks{This work is supported in part by the AFOSR award
		FA9550-23-1-0060, the NSF award 2336137 (Joshi, Mehta) and the DFG grant 318763901/SFB1294 (Kim).
	}
	\thanks{J. W. Kim is with the Department of Mechanical and System Design Engineering at the Hongik University. (e-mail: jin.won.kim@hongik.ac.kr)}
	\thanks{A.~A.~Joshi and P.~G.~Mehta are with the Coordinated
	Science Laboratory and the Department of Mechanical Science and
	Engineering at the University of Illinois at Urbana-Champaign.
	(e-mail: anantaj2; mehtapg@illinois.edu)}
}

\begin{document}

\maketitle
\thispagestyle{empty}
\pagestyle{empty}

%%%%%%%%%%%%%%%%%%%%%%%%%%%%%%%%%%%%%%%%%%%%%%%%%%%%%%%%%%%%%%%%%%%%%%%%%%%%%%%%
\begin{abstract}
	
A backward map is introduced for the purposes of analysis of nonlinear (stochastic) filter stability. The backward map is important because filter-stability, in the sense of $\chisq$-divergence, follows from a certain variance decay property associated with the backward map.  To show this property requires additional assumptions on the hidden Markov model (HMM).  The analysis in this paper is based on introducing a Poincar\'e Inequality (PI) for HMMs with white noise observations.  In finite state-space settings, PI is related to both the ergodicity of the Markov process as well as the observability of the HMM. It is shown that the Poincar\'e constant is positive if and only if the HMM is detectable.  

\end{abstract}
%%%%%%%%%%%%%%%%%%%%%%%%%%%%%%%%%%%%%%%%%%%%%%%%%%%%

\section{Introduction}
\label{sec:intro}

%\subsection{Backward map for stochastic stability}
 
Dissipation is at the heart of any stability theory for dynamical
systems.  For Markov processes, dissipation is referred to as
{\em variance decay}.  To illustrate the key ideas, consider a Feller-Markov
process $X=\{X_t: t\geq 0\}$ taking values in a Polish state-space
$\bS$ and suppose $\bmu$ is a given invariant measure. The
fundamental object of interest is the Markov semigroup defined
by~\cite[Eq.~(1.1.1)]{bakry2013analysis}
\[
(P_tf)(x):=\E^x (f(X_t)),\quad x\in\bS,\quad t\geq 0
\]   
for $f:\bS\to \Re$ in some class of measurable functions.  The problem of
stochastic stability is to show that $P_tf \to \bmu(f)$, in some 
suitable sense, as $t\to\infty$.  As defined, $(P_tf)(x)$ has an
interpretation as the expectation of the random variable $f(X_t)$ starting
from an initial condition $X_0=x$.  Therefore, $P_tf \to \bmu(f)$ means
that this expectation asymptotically converges to its stationary value
$\bmu(f)$ for all choices (in suitable sense) of the initial conditions $x\in\bS$.  

%Let $L^2(\bmu)=\{f:\bS\to \Re\; |\;  \bmu(f^2)<\infty\}$.

The dissipation equation requires the notation,
\begin{align*}
		\text{(variance)}\qquad\clV^\bmu(f) &:= \bmu (f^2)
                                                       -\bmu(f)^2 \\
		\text{(energy)}\qquad\clE^\bmu(f) &:= \bmu\big(\Gamma f)
\end{align*}
where $\Gamma$ is the so called carr\'e du champ operator (Defn.~\ref{def:cdc}).  The
operator is a positive-definite bilinear form. 
Using these definitions, the dissipation equation arises as
\[
\frac{\ud }{\ud t} \clV^\bmu(P_t f) = - \clE^\bmu(P_t f),\quad t\geq 0
\]
The calculation for the same appears in Appendix~\ref{apdx:diss_MP}, see also~\cite[Thm.~4.2.5.)]{bakry2013analysis}. The equation shows that $\{\clV^\bmu(P_t f) :t\geq 0\}$ is non-increasing. To show that the variance decays to zero requires a suitable relationship between energy and variance.  The simplest such relationship is through the Poincar\'e Inequality (PI):
\[
	\text{(PI)}\qquad  \clE^\bmu(f) \ge c\,
         \clV^\bmu(f),\quad  \forall f  \in \clD
         \]
where the sharpest such constant $c$ is referred to as the Poincar\'e constant; here, $\clD$ is a suitable space of test functions for which the energy $\clE^\bmu(f)$ is well-defined (see Defn.~\ref{def:cdc}).  PI is useful to conclude stochastic stability where $c$ gives the exponential rate of convergence.  For reversible Markov processes, $c$ is referred to as the spectral gap constant.

%In recent work, we have extended the classical minimum variance duality to
%hidden Markov models (HMM) with white noise observations.  The
%extension opens the door for defining variance decay also for HMMs
%providing another motivation for this paper.  

\subsection{Aims and contributions of this paper}
  
This paper is concerned with extension of variance decay to the study of the nonlinear filter.  Specifically, the following questions are of interest:
\begin{enumerate}
\item[\textbf{Q1.}] What is the appropriate notion of variance decay for a nonlinear
  filter?  And how is it related to filter stability?
\item[\textbf{Q2.}] What is the appropriate generalization of the dissipation
  equation for the nonlinear filter?
\item[\textbf{Q3.}]  What is the appropriate generalization of the Poincar\'e
  inequality for the filter? And how is it 
  related to the hidden Markov model (HMM) properties such as observability and detectability?
\end{enumerate}
In this paper, we provide an answer to each of these questions
(see \Prop{prop:ans_Q1} for Q1, \Prop{prop:backward-map-and-bsde} for Q2,
and \Prop{prop:sufficiency}-\ref{prop:necessity} for Q3).  An
original contribution of this paper is the backward map that is
introduced here for a general class of HMMs. The backward map is important because filter-stability, in the sense of $\chisq$-divergence, follows from a certain variance decay property associated with the backward map.  While the backward map and the variance decay is for a general class of HMMs, the answers to Q2 and Q3 are
given for HMM with white noise observations.
The overall approach may be regarded as an {\em optimal
control approach} to filter stability based on our recent work on
duality~\cite{duality_jrnl_paper_I,duality_jrnl_paper_II,JinPhDthesis}.
Our approach is contrasted with the intrinsic approach to filter
stability based upon specification of a certain forward map~\cite{chigansky2009intrinsic}.

\subsection{Outline of the remainder of this paper}

\Sec{sec:prelim} contains math preliminaries for the HMM and the filter
stability problem.  The backward map is introduced in
\Sec{sec:bmap} and specialized to white noise observations in
\Sec{sec:white-noise-model}. For this HMM, the definition of the
Poincar\'e Inequality (PI) is introduced in \Sec{sec:main-results}.
The PI is related to the HMM model properties in
\Sec{sec:model_props} (for the finite state-space settings) and illustrated using numerics in
Sec.~\ref{sec:numerics}.  
%The paper ends with some conclusion in Sec.~\ref{sec:conclusion}. 
The proofs appear in the Appendix. 

% The remainder of the paper is organized as follows. In Sec.~\ref{sec:bmap}, the nonlinear filtering problem is described and then the backward map for filter stability problem is presented. In Sec.~\ref{sec:white-noise-model}, we consider a more specific setting called white noise observation model to analyze the backward map and its BSDE embedding. Sec.~\ref{sec:main-results} introduces the energy and variance definition for the nonlinear filter and presents the Poincar\'e inequality. The model properties that leads to Poincar\'e inequality also appear in Sec.~\ref{sec:main-results}.
% Sec.~\ref{sec:numerics} includes numerical examples to illustrate the relationship between the model properties and filter stability.
% The paper ends with some conclusion in Sec.~\ref{sec:conclusion}.
% All proofs which are omitted in the main body of the paper appear in the Appendix.
  
\newpage

\section{Mathematical Preliminaries}\label{sec:prelim}

\subsection{Hidden Markov model (HMM)}

For the definition and analysis of the nonlinear filter, a standard
model for HMM is specified (see~\cite[Sec.~2]{van2009observability})
through construction of a
pair of stochastic processes $(X, Z):=\{(X_t,Z_t):0\leq
  t\leq T\}$ on probability space
$(\Omega,\clF_T,\sP)$ as follows: 
\begin{itemize}
\item The {state-space} $\bS$ is a locally compact Polish space.  The
  important examples are (i) $\bS=\{1,2,\hdots,d\}$ of finite or countable
  cardinality, and (ii) $\bS\subseteq \Re^d$. 
\item The {observation-space} $\bO=\Re^m$.
\item The {signal-observation process} $(X,Z)$ is a Feller-Markov process.
\item The {state process} $X$ is a Feller-Markov process with $X_0\sim \mu \in\clP(\bS)$. Here, $\clP(\bS)$ is the space of probability measures defined on the Borel $\sigma$-algebra on $\bS$ and $\mu$ is referred to as the prior.  
\item The observation process $Z$ has $Z_0=0$ and conditionally independent
  increments given the state process $X$.  That is, given $X_t$, an increment
  $Z_s-Z_t$ is independent of $\clZ_t := \sigma\big(\{Z_s:0\le
	s\le t\}\big)$, for all $s>t$.  The filtration generated by the observations is denoted 
	$\clZ :=\{\clZ_t:0\le t\le T\}$. 
\end{itemize}

The objective of nonlinear  filtering is to compute the
conditional expectation
\[
\pi_T(f):= \E\big(f(X_T)\mid \clZ_T\big),\quad f \in C_b(\bS)
\]
where $C_b(\bS)$ is the space of continuous and bounded functions. The conditional measure $\pi_T$ is referred to as the {\em nonlinear filter}.

To stress the dependence on the prior $\mu$, a standard convention is to denote the probability space as $(\Omega,\clF_T,\sP^\mu)$, the expectation operator as $\E^\mu$, and the nonlinear filter as $\pi_T^\mu$.  In practice, the prior may not be known.  With an incorrect choice of prior $\nu\in\clP(\bS)$, the filter is denoted as $\pi_T^\nu$.  The precise meaning for these along with the definition of filter stability appear next.   

\subsection{Filter stability}

Let $\rho \in \clP(\bS)$.  On the common measurable space $(\Omega,
\clF_T)$, $\sP^\rho$ is used to denote another probability measure
such that the transition law of $(X,Z)$ is identical but $X_0\sim
\rho$ (see~\cite[Sec.~2.2]{clark1999relative} for an explicit construction
of $\sP^\rho$ as a probability measure over paths on $\bS\times\bO$.).  The associated expectation operator is denoted by $\E^\rho(\cdot)$ and the nonlinear filter by $\pi_t^\rho(f) =
\E^\rho\big(f(X_t)\mid\clZ_t\big)$.  The
two important choices for $\rho$ are as follows:
\begin{itemize}
	\item $\rho=\mu$. The measure $\mu$ has the meaning of the true prior.
	\item  $\rho=\nu$.  The measure $\nu$ has the meaning of an
          incorrect prior that is used to compute the filter.  
%It is assumed that $\mu\ll\nu$.
\end{itemize}
The relationship between $\sP^\mu$ and $\sP^\nu$ is as follows ($\sP^\mu|_{\clZ_t}$ denotes
the restriction of $\sP^\mu$ to the $\sigma$-algebra $\clZ_t$):

\medskip

\begin{lemma}[Lemma 2.1 in \cite{clark1999relative}] \label{lm:change-of-Pmu-Pnu}
	Suppose $\mu\ll \nu$. Then 
	\begin{itemize}
		\item $\sP^\mu\ll\sP^\nu$, and the change of measure is given by
		\begin{equation*}\label{eq:P-mu-P-nu}
			\frac{\ud \sP^\mu}{\ud \sP^\nu}(\omega) =
			\frac{\ud \mu}{\ud
				\nu}\big(X_0(\omega)\big)\quad
			\sP^\nu\text{-a.s.} \;\omega
			%\E^\mu\big(\phi(X,Z)\big) = \E^\nu\Big(\phi(X,Z)\frac{\ud \mu}{\ud \nu}(X_0)\Big)
		\end{equation*}
		%for any given functional $\phi:C\big([0,T];\bS\big)\times C\big([0,T];\Re^m\big)\to \Re$.
		\item For each $t > 0$, $\pi_t^\mu \ll \pi_t^\nu$, $\sP^\mu|_{\clZ_t}$-a.s..
	\end{itemize} 
	
\end{lemma}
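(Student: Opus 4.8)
The plan is to exploit that $\sP^\mu$ and $\sP^\nu$ live on the common space $(\Omega,\clF_T)$ and are built from the \emph{same} transition law of $(X,Z)$, differing only in the law of $X_0$. First I would record the disintegration of each path measure over the initial condition: since the conditional law of the path $(X,Z)$ given $X_0=x$ is a kernel $Q_x(\cdot)$ that is identical under both measures, one has $\sP^\rho(\ud\omega)=\int_\bS Q_x(\ud\omega)\,\rho(\ud x)$ for $\rho\in\{\mu,\nu\}$. Existence of such a regular conditional kernel is guaranteed because $\bS$ and the path space are Polish, so this is the point where the explicit construction of $\sP^\rho$ (Clark--Crisan, cited in the excerpt) is invoked.

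For the first claim, write $g:=\frac{\ud\mu}{\ud\nu}$, which exists since $\mu\ll\nu$. For any bounded $\clF_T$-measurable $F$,
\begin{align*}
\E^\mu[F] &= \int_\bS\Big(\int F(\omega)\,Q_x(\ud\omega)\Big)\mu(\ud x)\\
&= \int_\bS\Big(\int F(\omega)\,Q_x(\ud\omega)\Big) g(x)\,\nu(\ud x).
\end{align*}
Because $X_0=x$ holds $Q_x$-a.s., I may replace the constant $g(x)$ by $g(X_0(\omega))$ inside the inner integral and re-assemble the double integral as $\E^\nu[F\,g(X_0)]$. This yields $\sP^\mu\ll\sP^\nu$ with $\frac{\ud\sP^\mu}{\ud\sP^\nu}=g(X_0)$, $\sP^\nu$-a.s.

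For the second claim I would invoke the abstract Bayes (Kallianpur--Striebel) formula against the density $g(X_0)$: for $f\in C_b(\bS)$,
\[
\pi_t^\mu(f)=\E^\mu[f(X_t)\mid\clZ_t]=\frac{\E^\nu[f(X_t)\,g(X_0)\mid\clZ_t]}{\E^\nu[g(X_0)\mid\clZ_t]}.
\]
Introducing the kernel $h_t(x):=\E^\nu[g(X_0)\mid X_t=x,\clZ_t]$ (a regular conditional expectation, Borel in $x$ and $\clZ_t$-measurable in $\omega$, again using Polishness), the tower property gives $\E^\nu[f(X_t)g(X_0)\mid\clZ_t]=\pi_t^\nu(f\,h_t)$, and with $f\equiv 1$, $\E^\nu[g(X_0)\mid\clZ_t]=\pi_t^\nu(h_t)$. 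Hence
\[
\pi_t^\mu(f)=\frac{\pi_t^\nu(f\,h_t)}{\pi_t^\nu(h_t)},
\]
which exhibits $\pi_t^\mu$ as carrying the nonnegative $\pi_t^\nu$-density $h_t/\pi_t^\nu(h_t)$, i.e. $\pi_t^\mu\ll\pi_t^\nu$. The normalizer $\pi_t^\nu(h_t)=\E^\nu[g(X_0)\mid\clZ_t]$ is strictly positive $\sP^\mu$-a.s., since its vanishing would force $g(X_0)=0$ on a set of positive $\sP^\mu$-measure, contradicting $\frac{\ud\sP^\mu}{\ud\sP^\nu}=g(X_0)$; this is exactly why the conclusion is asserted $\sP^\mu|_{\clZ_t}$-a.s. rather than $\sP^\nu$-a.s.

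I expect the main obstacle to be the careful measure-theoretic bookkeeping of the regular conditional objects: justifying the disintegration $\sP^\rho=\int Q_x\,\rho(\ud x)$, establishing existence and joint measurability of the kernel $h_t(x)$, and verifying that the exceptional null sets appearing in the two Bayes ratios are genuinely $\sP^\mu$-null. The algebraic identities themselves are routine once these foundations (Polish state and path spaces, existence of regular conditional probabilities, and the abstract Bayes rule) are laid out.
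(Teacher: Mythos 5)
Your proof is correct, but note that the paper contains no proof of this lemma to compare against: it is imported verbatim from Lemma 2.1 of \cite{clark1999relative}, and your two-step route (disintegration $\sP^\rho = \int_\bS Q_x\,\rho(\ud x)$ over the initial condition to identify $\ud\sP^\mu/\ud\sP^\nu = g(X_0)$, then the abstract Bayes formula plus the tower property to exhibit $h_t/\pi_t^\nu(h_t)$ as a version of $\ud\pi_t^\mu/\ud\pi_t^\nu$) is essentially the standard argument of that cited source. The one step worth tightening is the passage from the identity $\pi_t^\mu(f)=\pi_t^\nu(f\,h_t)/\pi_t^\nu(h_t)$, which holds $\sP^\mu$-a.s.\ for each \emph{fixed} $f$, to the almost-sure absolute continuity of the random measures themselves: run the identity over a countable separating class of bounded continuous functions (available since $\bS$ is Polish) so that the exceptional $\sP^\mu$-null set can be chosen independently of $f$, exactly the bookkeeping issue you flag at the end.
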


\medskip

Suppose $\mu\ll\nu$.  Then $\pi_T^\mu \ll \pi_T^\nu$ from~Lem.~\ref{lm:change-of-Pmu-Pnu}.  Denote the Radon-Nikodyn (R-N) derivative as
\[
\gamma_T(x):=\frac{\ud \pi_T^\mu}{\ud
	\pi_T^\nu}(x),\quad x\in\bS
\]
It is noted that while $\gamma_0= \frac{\ud \mu}{\ud \nu}$ is a deterministic function on $\bS$, $\gamma_T$ is a $\clZ_T$-measurable function on $\bS$.
%Both of these are examples of likelihood ratio and referred to as such throughout the paper. 
A filter is said to be stable if the random function $\gamma_T\to 1$, in a suitable sense, as $T\to\infty$. In this paper, the following notion of filter stability is adopted based on
$\chisq$-divergence\footnote{For any two probability measures $\mu,\nu\in\clP(\bS)$ such that
  $\mu\ll\nu$, the $\chisq$-divergence $\chisq(\mu\mid
                                                    \nu) := \int_\bS
                                                    (\frac{\ud
                                                      \mu}{\ud \nu}(x) -
                                                    1)^2\ud \nu(x)$.}:
%(Because $\mu$ has the meaning of the correct prior, the expectation is with respect to $\sP^\mu$):

\begin{definition}\label{def:filter-stability}
	The nonlinear filter is \emph{stable} in the sense of 
	\begin{align*}
%		\text{(KL divergence)}\qquad& \E^\mu\big(\kl(\pi_T^\mu\mid \pi_T^\nu)\big) \; \longrightarrow\; 0\\
		\text{($\chi^2$ divergence)}\qquad&
                                                    \E^\mu\big(\chisq(\pi_T^\mu\mid \pi_T^\nu)\big) \; \longrightarrow\; 0
%\\
%		\text{(Total variation)}\qquad& \E^\mu\big(	\|\pi_T^\mu - \pi_T^\nu\|_\tv\big) \; \longrightarrow\; 0
	\end{align*}
	as $T\to \infty$ for every $\mu, \nu\in\clP(\bS)$ such that
	$\mu\ll \nu$.
\end{definition}

\begin{remark}\label{rem:var_gammaT_is_chisq}
  $\gamma_T:\bS\to\Re$ is a non-negative random function on $\bS$ with $\E^\nu (\gamma_T(X_T)|\clZ_T)=\pi_T^\nu(\gamma_T) = \int_\bS \gamma_T(x) \ud \pi_T^\nu(x)=1$.  The square of the function $\gamma_T$ is denoted by $\gamma_T^2$ (That is, $\gamma_T^2(x) := (\gamma_T(x))^2$ for $x\in\bS$). Then
 \[
\E^\nu (|\gamma_T(X_T)-1|^2|\clZ_T) = 
  \pi_T^\nu (\gamma_T^2) -1  = \chisq\big(\pi_T^\mu\mid\pi_T^\nu\big)
\]
Therefore, $\chi^2$-divergence $\chisq\big(\pi_T^\mu\mid\pi_T^\nu\big)$ has the meaning of the conditional variance of the random variable $\gamma_T(X_T)$.  Next, $\E^\nu(\gamma_T(X_T)) = \E^\nu (\E^\nu (\gamma_T(X_T)|\clZ_T)) = 1$ and therefore the variance of $\gamma_T(X_T)$ is given by,
\[
\var^\nu (\gamma_T(X_T)) = \E^\nu (|\gamma_T(X_T)-1|^2)
%= \E^\nu(\chisq\big(\pi_T^\mu\mid\pi_T^\nu\big))
\]
%where the final equality follows from the formula for the conditional variance.
%the function $\gamma_T$ (more accurately, the conditional variance of the
%\begin{align*}
%  &\text{(cond. mean)} \quad \E^\nu (\gamma_T(X_T)|\clZ_T)=\pi_T^\nu(\gamma_T) = 1 \\
%  &\text{(cond. var.)} \quad \E^\nu (|\gamma_T(X_T)-1|^2|\clZ_T) = \chisq\big(\pi_T^\mu\mid\pi_T^\nu\big) \\
%  &\text{(mean)} \quad \E^\nu(\gamma_T(X_T)) = \E^\nu (\E^\nu (\gamma_T(X_T)|\clZ_T)) = 1\\
%  &\text{(variance)}\quad \var^\nu (\gamma_T(X_T)) = \E^\nu (|\gamma_T(X_T)-1|^2) 
%\end{align*}
\end{remark}

\section{Backward map for the nonlinear filter}\label{sec:bmap}

%Let $\{\pi_t^\mu:t\geq 0\}$ and $\{\pi_t^\nu:t\geq 0\}$ denote the
%nonlinear filter starting from two priors, $\mu$ and $\nu$,
%respectively (These are obtained through solving the equation of
%nonlinear filter).
%Assuming $\mu\ll \nu$, consider the RN-derivative
%$\gamma_T:=\frac{\ud \pi_T^\mu}{\ud \pi_T^\nu}$.  

A key original concept introduced in this paper is the \emph{backward map}
$\gamma_T\mapsto y_0$ defined as follows:
\begin{equation}\label{eq:bmap}
	y_{0}(x) :=  \E^\nu (\gamma_T(X_T)|[X_0=x]),\quad x\in\bS
\end{equation}
The function $y_0:\bS\to \Re$ is deterministic, non-negative, and 
$\nu(y_0) =  \E^\nu (\gamma_T(X_T)) = 1$.

Since $\mu\ll\nu$, it follows $\mu(y_0) = \E^\mu\big(\gamma_T(X_T)\big)$. Using the tower property, 
\[
\mu(y_0) = \E^\mu\big(\gamma_T(X_T)\big) = \E^\mu\big(\E^\mu(\gamma_T(X_T)|\clZ_T)\big) = \E^\mu\big(\pi_T^\mu(\gamma_T)\big) 
\]
Now, $\pi_T^\mu(\gamma_T)=\pi_T^\nu(\gamma_T^2)$, and therefore,
\[
\mu(y_0)=\E^\mu\big(\pi_T^\nu(\gamma_T^2)\big)
\]
Noting $\pi_T^\nu(\gamma_T^2)-1 = \chisq\big(\pi_T^\mu\mid\pi_T^\nu\big)$ is the $\chisq$-divergence (see formula in Rem.~\ref{rem:var_gammaT_is_chisq}), 
%\begin{equation}\label{eq:chisq-to-y0}
\[
\E^\mu\big(\chisq\big(\pi_T^\mu\mid\pi_T^\nu\big)\big) = \mu(y_0) -
\nu(y_0)
\]
%\end{equation}
Therefore, filter stability in the sense of $\chisq$-divergence is
equivalent to showing that $\mu(y_0) \stackrel{(T\to
  \infty)}{\longrightarrow} 1$. 

Because 
$
\mu(y_0)-\nu(y_0) = \nu\big((y_0-1)(\gamma_0-1)\big)
$, 
upon using the Cauchy-Schwarz inequality, 
\begin{equation}\label{eq:chisq_identity_intro}
	|\E^\mu\big(\chi^2(\pi_T^\mu\mid\pi_T^\nu)\big)|^2\leq
\var^\nu (y_0(X_0)) \; \chisq(\mu | \nu)
\end{equation}
where $\var^\nu(y_0(X_0))=\E^\nu\big( |y_0(X_0)-1|^2\big)$. 

%where $\operatorname{supp}(\nu)$ is the support of the measure $\nu$.

%  (such a function is referred
% to as a likelihood ratio).
%of some probability measure with respect to $\nu$.

From~\eqref{eq:chisq_identity_intro}, for all such choices of priors $\mu\in\clP(\bS)$ such that $\chi^2( \mu|\nu)
<\infty$, 
%filter stability property is equivalent to the following:
a sufficient condition for filter stability is the following:
%variance decay property:
\begin{flalign}\label{eq:VDP}
	&\textbf{(variance decay prop.)}\quad  \var^\nu\big(y_0(X_0)\big)  \stackrel{(T\to\infty)}{\longrightarrow}
	0 &
% \;\;\forall \mu\ll \nu&
\end{flalign}
%where $\var^\nu(y_0(X_0))=\E^\nu\big( |y_0(X_0)-1|^2\big)$.
%{\bf variance decay property}:
%\[
%\var^\nu(y_0) \stackrel{{T\to\infty}}{\longrightarrow} 0
%\]

We have thus shown the following:

\medskip

\begin{proposition}[\textbf{Answer to Q1 in \Sec{sec:intro}}]\label{prop:ans_Q1}
Consider the backward map $\gamma_T\mapsto y_0$ defined
by~\eqref{eq:bmap}. Suppose $\chi^2( \mu|\nu)
<\infty$ and the variance decay property~\eqref{eq:VDP} holds.  Then
the filter is stable in the sense of $\chisq$-divergence. 
\end{proposition}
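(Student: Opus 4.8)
The plan is to assemble the chain of identities already displayed above into a single scalar bound and then pass to the limit $T\to\infty$. The starting point is the identity
\[
\E^\mu\big(\chisq(\pi_T^\mu\mid\pi_T^\nu)\big) = \mu(y_0) - \nu(y_0),
\]
which I would re-derive cleanly by the tower property: write $\mu(y_0)=\E^\mu(\gamma_T(X_T))=\E^\mu\big(\pi_T^\mu(\gamma_T)\big)$, use the defining relation $\gamma_T=\ud\pi_T^\mu/\ud\pi_T^\nu$ to get $\pi_T^\mu(\gamma_T)=\pi_T^\nu(\gamma_T^2)$, and combine with $\nu(y_0)=1$ together with the formula $\pi_T^\nu(\gamma_T^2)-1=\chisq(\pi_T^\mu\mid\pi_T^\nu)$ from Rem.~\ref{rem:var_gammaT_is_chisq}.

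Next I would rewrite the right-hand side as a covariance under $\nu$. Since $\gamma_0=\ud\mu/\ud\nu$ gives $\mu(y_0)=\nu(\gamma_0 y_0)$, and since $\nu(\gamma_0)=\nu(y_0)=1$, we have
\[
\mu(y_0)-\nu(y_0) = \nu\big((\gamma_0-1)y_0\big) = \nu\big((\gamma_0-1)(y_0-1)\big),
\]
the last equality using $\nu(\gamma_0-1)=0$. Applying the Cauchy--Schwarz inequality in $L^2(\nu)$ then yields
\[
\big|\E^\mu\big(\chisq(\pi_T^\mu\mid\pi_T^\nu)\big)\big|^2 \le \nu\big((y_0-1)^2\big)\,\nu\big((\gamma_0-1)^2\big) = \var^\nu\big(y_0(X_0)\big)\,\chisq(\mu\mid\nu),
\]
where $\nu\big((y_0-1)^2\big)=\var^\nu(y_0(X_0))$ (valid because $\nu(y_0)=1$) and $\nu\big((\gamma_0-1)^2\big)=\chisq(\mu\mid\nu)$ by the definition of $\chisq$-divergence. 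This is exactly~\eqref{eq:chisq_identity_intro}.

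The conclusion is then immediate. The left-hand side is nonnegative; on the right, the factor $\chisq(\mu\mid\nu)$ is a finite constant by hypothesis, while $\var^\nu\big(y_0(X_0)\big)\to 0$ as $T\to\infty$ by the variance decay property~\eqref{eq:VDP}. Hence the right-hand side vanishes in the limit, forcing $\E^\mu\big(\chisq(\pi_T^\mu\mid\pi_T^\nu)\big)\to 0$, which is precisely filter stability in the sense of Defn.~\ref{def:filter-stability}.

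Because all of the substantive computation is already carried out in the text preceding the statement, there is no deep obstacle here; the proof is essentially bookkeeping. The one point that genuinely requires care is the implicit $T$-dependence: the backward image $y_0$ is itself a $T$-indexed family (through $\gamma_T$), so the variance decay hypothesis must be read as a statement about this family, and one should note that the second factor $\chisq(\mu\mid\nu)$ in the Cauchy--Schwarz bound is independent of $T$. One also wants to confirm that $\var^\nu(y_0(X_0))$ is finite for the bound to be meaningful, but this is implicit in the variance decay hypothesis and, together with $\chisq(\mu\mid\nu)<\infty$ and Lem.~\ref{lm:change-of-Pmu-Pnu}, the requisite integrability is routine.
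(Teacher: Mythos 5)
Your proposal is correct and follows essentially the same route as the paper, whose own proof is precisely the chain of identities preceding the proposition: the tower-property identity $\E^\mu\big(\chisq(\pi_T^\mu\mid\pi_T^\nu)\big)=\mu(y_0)-\nu(y_0)$, the covariance rewriting $\nu\big((\gamma_0-1)(y_0-1)\big)$, the Cauchy--Schwarz bound~\eqref{eq:chisq_identity_intro}, and the passage to the limit under~\eqref{eq:VDP}. Your added remarks on the $T$-dependence of $y_0$ and the $T$-independence of $\chisq(\mu\mid\nu)$ are sound clarifications of points the paper leaves implicit.
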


\medskip

Next, from~\eqref{eq:bmap}, $
(y_0(X_0) -1) =  \E^\nu \big((\gamma_T(X_T)-1)|X_0\big)$, 
and using
Jensen's inequality, 
%\begin{subequations}
\begin{equation}\label{eq:jensens_ineq}
	\var^\nu\big(y_0(X_0)\big) \le \var^\nu\big(\gamma_T(X_T)\big)
\end{equation}
where $\var^\nu(\gamma_T(X_T)):=\E^\nu\big(|\gamma_T(X_T)-1|^2\big)$. 
Therefore, the backward map $\gamma_T\mapsto y_0$ is non-expansive: The variance of the random variable $y_0(X_0)$ is no larger than the variance of the random variable $\gamma_T(X_T)$.

Because contractive operators are a subset of non-expansive operators, one may ask if filter stability is obtained from showing that the backward map $\gamma_T\mapsto y_0$ is contractive?  The answer is provided in the following proposition:

\medskip

\begin{proposition}\label{prop:stronger_jensen_implies_filter_stability}
Suppose a stronger form of~\eqref{eq:jensens_ineq} holds s.t.
\begin{equation}\label{eq:jensens_ineq_stronger}
\text{(Assumption)} \quad \var^\nu\big(y_0(X_0)\big) \le e^{-cT} \var^\nu\big(\gamma_T(X_T)\big) 
\end{equation}
(Because of~\eqref{eq:jensens_ineq}, this is always true with $c=0$). Then
\[
\E^\mu\big(\chi^2(\pi_T^\mu\mid\pi_T^\nu)\big) \leq \frac{1}{\underline{a}} e^{-cT} \chisq(\mu|\nu)
\]
where $\underline{a} = \text{essinf}_{x\in\bS} \gamma_0(x)$. 
\end{proposition}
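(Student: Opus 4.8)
The plan is to chain together the identities already assembled in \Sec{sec:bmap} and then close the argument with a self-improving inequality. The natural starting point is the Cauchy--Schwarz bound~\eqref{eq:chisq_identity_intro}, namely $|\E^\mu(\chisq(\pi_T^\mu\mid\pi_T^\nu))|^2 \le \var^\nu(y_0(X_0))\,\chisq(\mu\mid\nu)$. Into the first factor I would substitute the contraction assumption~\eqref{eq:jensens_ineq_stronger} to get $\var^\nu(y_0(X_0)) \le e^{-cT}\var^\nu(\gamma_T(X_T))$. The next ingredient is the identity $\var^\nu(\gamma_T(X_T)) = \E^\nu(\chisq(\pi_T^\mu\mid\pi_T^\nu))$, which follows by taking $\E^\nu$ of the conditional-variance formula in Rem.~\ref{rem:var_gammaT_is_chisq}: there $\E^\nu(|\gamma_T(X_T)-1|^2\mid\clZ_T) = \chisq(\pi_T^\mu\mid\pi_T^\nu)$, and $\var^\nu(\gamma_T(X_T)) = \E^\nu(|\gamma_T(X_T)-1|^2)$, so the tower property delivers the claim.

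The crucial step, and the only one that actually invokes $\underline{a}$, is to convert the $\nu$-expectation $\E^\nu(\chisq(\pi_T^\mu\mid\pi_T^\nu))$ back into the $\mu$-expectation that appears on the left-hand side of the target bound. For this I would use Lem.~\ref{lm:change-of-Pmu-Pnu}: since $\frac{\ud\sP^\mu}{\ud\sP^\nu} = \gamma_0(X_0)$ and $\chisq(\pi_T^\mu\mid\pi_T^\nu)$ is a non-negative $\clZ_T$-measurable random variable, one has $\E^\mu(\chisq(\pi_T^\mu\mid\pi_T^\nu)) = \E^\nu(\gamma_0(X_0)\,\chisq(\pi_T^\mu\mid\pi_T^\nu))$. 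Bounding $\gamma_0(X_0)\ge\underline{a}$ a.s. (meaningful when $\underline{a}>0$) then yields $\E^\nu(\chisq(\pi_T^\mu\mid\pi_T^\nu)) \le \frac{1}{\underline{a}}\,\E^\mu(\chisq(\pi_T^\mu\mid\pi_T^\nu))$.

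Combining the three displays gives $|\E^\mu(\chisq(\pi_T^\mu\mid\pi_T^\nu))|^2 \le \frac{1}{\underline{a}}\,e^{-cT}\,\E^\mu(\chisq(\pi_T^\mu\mid\pi_T^\nu))\,\chisq(\mu\mid\nu)$. Writing $A := \E^\mu(\chisq(\pi_T^\mu\mid\pi_T^\nu))\ge 0$, this is $A^2 \le \frac{1}{\underline{a}}\,e^{-cT}\,A\,\chisq(\mu\mid\nu)$; dividing through by $A$ in the case $A>0$ (the asserted bound being trivial when $A=0$) produces exactly $A \le \frac{1}{\underline{a}}\,e^{-cT}\,\chisq(\mu\mid\nu)$, which is the claim.

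Since the argument is essentially a matter of threading together results already in hand, I do not anticipate a genuine analytic obstacle. The step that requires the most care is the change-of-measure bound, and in particular its \emph{direction}: estimating $\gamma_0\ge\underline{a}$ converts the $\nu$-expectation into a $\mu$-expectation on the correct side of the inequality, which is precisely what lets the $\E^\mu(\chisq(\pi_T^\mu\mid\pi_T^\nu))$ factor be absorbed back into the left-hand side and then divided out. This self-referential cancellation is the conceptual crux; it is also what explains the appearance of $1/\underline{a}$ and why the essential infimum must be strictly positive (equivalently, $\nu$-a.e.\ positivity of $\gamma_0$) for the resulting bound to carry content.
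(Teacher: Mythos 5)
Your proposal is correct and follows essentially the same route as the paper's own proof: Cauchy--Schwarz bound~\eqref{eq:chisq_identity_intro}, the contraction assumption~\eqref{eq:jensens_ineq_stronger}, the identity $\var^\nu(\gamma_T(X_T)) = \E^\nu\big(\chi^2(\pi_T^\mu\mid\pi_T^\nu)\big)$, and then cancellation of one factor of $\E^\mu\big(\chi^2(\pi_T^\mu\mid\pi_T^\nu)\big)$. The only difference is presentational: the paper compresses your change-of-measure step into the one-line claim that the ratio $R_T := \E^\mu(\chi^2(\pi_T^\mu\mid\pi_T^\nu))/\E^\nu(\chi^2(\pi_T^\mu\mid\pi_T^\nu))$ is bounded below by $\underline{a}$, which is exactly what your explicit bound $\E^\mu(\cdot)=\E^\nu(\gamma_0(X_0)\,\cdot)\geq \underline{a}\,\E^\nu(\cdot)$ justifies (and you additionally handle the degenerate case $\E^\mu(\chi^2(\pi_T^\mu\mid\pi_T^\nu))=0$, which the paper leaves implicit).
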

\medskip
\begin{proof}
See Appendix~\ref{apdx:pf-jensen-and-stability}. 
\end{proof}

\medskip

Based on the backward map, the analysis of filter stability involves
the following objectives:
\begin{enumerate}
\item To justify the stronger form~\eqref{eq:jensens_ineq_stronger} under a suitable definition of the Poincar\'e inequality (PI). 
\item Relate PI to the model properties, namely,
  (i) ergodicity of the Markov process; and (ii) 
  observability/detectability of the HMM.
\end{enumerate} 
While the general case remains open, these objectives are described for the special class of
HMMs with white noise observations.
 
The following remark is included to help relate the approach of this
paper to the
literature.  The reader may choose to skip ahead to
\Sec{sec:white-noise-model} without any loss of continuity. 

%\subsection{Comparison with the forward map}

\medskip

\begin{remark}[Comparison with the forward map]\label{rm:forward-map}
The backward map is contrasted with the \emph{forward map}, which is the
starting point of the intrinsic approach to the problem of filter
stability~\cite{chigansky2009intrinsic}.  The \emph{forward map} $\gamma_0\mapsto \gamma_T$ is
defined as follows:
\begin{equation*}\label{eq:fmap}
	\gamma_T(x) = \E^\nu
	\Big( \frac{\gamma_0(X_0)}{\E^\nu(\gamma_0(X_0) \mid \clZ_T)}
        \,\bigg|\,\clZ_T\vee [X_T=x]\Big),\quad x\in \bS
\end{equation*}
Upon using the map to express the total variation, one can show (see~\cite[Sec.~6.5]{JinPhDthesis} for a complete derivation), 
\begin{align*}
\lim_{T\to \infty} & \E^\mu\big(\|\pi_T^\mu-\pi_T^\nu\|_\tv\big) = \\
&\E^\nu\Big(\,\Big|\E^\nu\big(\gamma_0 (X_0)\mid \bigcap_{T\ge 0}\clZ_\infty\vee \clF^X_{[T,\infty)}\big) - \E^\nu\big(\gamma_0 (X_0)\mid \clZ_\infty\big)\Big|\, \Big)
\end{align*}
where $\clZ_\infty=\bigcup_{T>0} \clZ_T$, $\clF^X_{[T,\infty)} =
\sigma\big(\{X_t: t \ge T\}\big)$ is the tail sigma-algebra of the
state process $X$. 
As a function of $T$, $\clZ_\infty\vee \clF^X_{[T,\infty)}$ is a
decreasing filtration and $\clZ_T$ is an increasing filtration. Therefore, by the martingale convergence theorem, both terms on the right-hand side converge as $T\to \infty$.  
The limit is zero if the following tail sigma-field identity holds:
\begin{equation*}\label{eq:kunita-tail-sigmafield}
	\bigcap_{T\ge 0} \clZ_\infty \vee \clF^X_{[T,\infty)} \stackrel{?}{=} \clZ_\infty %\vee \bigcap_{T\ge 0} \clF^X_{[T,\infty)}
\end{equation*}
This identity is referred to as the central problem in the stability
analysis of the nonlinear filter~\cite{van2010nonlinear}. The problem
generated significant attention
(see~\cite{budhiraja2003asymptotic} and references therein).

\end{remark}

\section{Embedding the backward map in a BSDE}\label{sec:white-noise-model}

\subsection{White noise observation model}

In the remainder of this paper, the observation process $Z$ is
according to 
the stochastic differential equation (SDE): 
	\begin{equation}\label{eq:obs-model}
		Z_t = \int_0^t h(X_s) \ud s + W_t,\quad t \ge 0
	\end{equation}
	where $h:\bS\to \Re^m$ is referred to as the observation function and $W =
	\{W_t:0\le t \le T\}$ is an $m$-dimensional Brownian motion
	(B.M.). We write $W$ is $\sP$-B.M. % with covariance matrix $R
	% \succ 0$.
	It is assumed that $W$ is independent of $X$. 

For the ensuing analysis, we also need to specify additional notation
for the Markov process $X$.  Specifically, the infinitesimal generator
of the Markov process $X$ is denoted by $\clA$.  In terms of $\clA$, an important
definition is as follows:

\begin{definition}[Defn.~1.4.1. in~\cite{bakry2013analysis}] 
\label{def:cdc}
The
  bilinear operator
\[
\Gamma(f, g)(x) := (\clA fg)(x) - f(x)(\clA g)(x) - g(x)(\clA f)(x),\;x\in\bS
\]
defined for every $(f,g)\in\clD \times \clD$ is called the \emph{carr\'e du
champ operator} of the Markov generator $\clA$. Here, $\clD$ is a 
vector space of (test) functions that are dense in a suitable $L^2$ space, stable under
products (i.e., $\clD$ is an algebra), and $\Gamma:\clD\times
\clD\to \clD$ such that $\Gamma(f,f)\geq 0$ for every
$f\in\clD$. For the case where an invariant measure $\bmu\in\clP(\bS)$ is available then the natural $L^2$ space is with respect to the invariant measure: $L^2(\bmu) = \{f:\bS\to\Re: \bmu(f^2)<\infty\}$. We use the notation $(\Gamma f):= \Gamma(f,f)$.   
\end{definition} 

\medskip

The above is referred to as the \emph{white noise observation model}
of nonlinear filtering. The model is denoted by $(\clA,h)$.  

Because these were stated piecemeal, the main assumptions 
are stated as follows:

\newP{Assumption 1} Consider HMM $(\clA,h)$.  
\begin{enumerate}
\item $X$ is a Feller-Markov process with generator $\clA$ and carr\'e du
champ $\Gamma$. 
\item $Z$ is according to the SDE~\eqref{eq:obs-model} such that the
  Novikov's condition holds:
$
\E \left(\exp\big(\half \int_0^T |h(X_t)|^2\ud t\big)\right) < \infty
$. 
%The condition holds, e.g., if $h\in C_b(\bS)$.  
\item $\mu,\nu\in\clP(\bS)$ are two priors with $\mu\ll\nu$. 
\end{enumerate}

%The objective of nonlinear (or stochastic) filtering is to compute the conditional expectation
%\[
%\pi_T(f):= \E\big(f(X_T)\mid \clZ_T\big),\quad f \in C_b(\bS)
%\]
%where $ C_b(\bS)$ is the space of continuous and bounded functions. 
%The conditional expectation is referred to as the {\em nonlinear filter}. 
For the HMM $(\clA,h)$, the 
nonlinear filter solves the
celebrated \emph{Kushner-Stratonovich equation}: 
\begin{equation}\label{eq:Kushner}
	\ud \pi_t(f) = \pi_t(\clA f) \ud t +
        \big(\pi_t(hf)-\pi_t(f)\pi_t(h)\big)^\tp \ud I_t
%,\quad \pi_0 = \rho
\end{equation}
where the \emph{innovation process} is defined by
\[
I_t := Z_t - \int_0^t \pi_s(h)\ud s,\quad t \ge 0
\]
With $\pi_0=\rho\in\clP(\bS)$, the filter $\{\pi_t^\rho:0\leq t\leq T\}$ is the solution
of~\eqref{eq:Kushner}.  Therefore, for the
HMM $(\clA,h)$, $\gamma_T$ is the R-N ratio of the
solution of~\eqref{eq:Kushner}, $\pi_T^\mu$ and $\pi_T^\nu$, with the two choices of priors,
$\pi_0=\mu$ and $\pi_0=\nu$, respectively.

\subsection{Embedding the backward map in a BSDE}

We continue the analysis of the backward
map $\gamma_T \mapsto y_0$ introduced as~\eqref{eq:bmap} 
in~\Sec{sec:bmap}.  For this purpose, consider the backward
stochastic differential equation (BSDE):
\begin{align}
	-\ud Y_t(x) &= \big((\clA Y_t)(x) + h^\tp(x)V_t(x)\big)\ud t -
                      V_t^\tp(x) \ud Z_t, \nonumber\\
	\quad Y_T(x) &= \gamma_T(x),\;\; x\in\bS, \;\;0\leq t\leq T \label{eq:closed-loop-bsde-2}
\end{align}
Here $(Y,V) = \{(Y_t(x),V_t(x)):\Omega \to \Re\times \Re^m
\;:\; x\in\bS, \;0\leq t\leq T\}$ is a $\clZ$-adapted solution of the
BSDE for a prescribed $\clZ_T$-measurable terminal condition
$Y_T=\gamma_T=\frac{\ud 
		\pi^\mu_T}{\ud \pi^\nu_T}$. %  (There is a well established existence,
% uniqueness, and regularity theory for the BSDE;
% cf.,~\cite{Zhang2017}.) For additional motivation concerning the BSDE,
% the reader is referred to~\cite{JinPhDthesis,duality_jrnl_paper_I,duality_jrnl_paper_II}.

For the HMM $(\clA,h)$, the relationship between the BSDE~\eqref{eq:closed-loop-bsde-2} and the backward map~\eqref{eq:bmap} is given
of the following proposition:

\medskip

\begin{proposition}[\textbf{Answer to Q2 in \Sec{sec:intro}}]
  \label{prop:backward-map-and-bsde}
Fix $T$.  Suppose $Y_0$ is the solution of~\eqref{eq:closed-loop-bsde-2} at time $t=0$ and $y_0$ is defined according to the backward map~\eqref{eq:bmap}. Then
\begin{equation*}\label{eq:bmap-and-bsde}
Y_0(x)=y_0(x) ,\quad x \in \bS
\end{equation*}
and along the solution $(Y,V)$ of~\eqref{eq:closed-loop-bsde-2},  
\begin{equation}\label{eq:var_contractive_ext}
\frac{\ud }{\ud t} \var^\nu (Y_t(X_t)) = \E^\nu \big(
  \pi_t^\nu(\Gamma Y_t) +
  \pi_t^\nu(|V_t|^2) \big),\quad 0\leq t\leq T
\end{equation}
where $\var^\nu(Y_t(X_t)) :=\E^\nu(|Y_t(X_t)-1|^2)$ and $
\pi_t^\nu(|V_t|^2):=\int_\bS V^\tp_t(x) V_t(x)\ud
\pi_t^\nu(x)$. (Eq.~\eqref{eq:var_contractive_ext} is an example of a
dissipation equation, and therefore an answer to 
question Q2 in \Sec{sec:intro}.) 
\end{proposition}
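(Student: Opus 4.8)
The plan is to show that the composed process $t\mapsto Y_t(X_t)$, obtained by evaluating the BSDE solution along the signal path, is a $\sP^\nu$-martingale; both assertions then follow from elementary martingale calculus. First I would apply a generalized (It\^o--Wentzell) It\^o formula to $Y_t(X_t)$, where $Y_t(\cdot)$ is the $\clZ$-adapted random field solving~\eqref{eq:closed-loop-bsde-2} and $X$ is the signal. Under $\sP^\nu$ the observation SDE reads $\ud Z_t = h(X_t)\ud t + \ud W_t$ with $W$ a $\sP^\nu$-Brownian motion independent of $X$. Substituting this into~\eqref{eq:closed-loop-bsde-2} shows that, for frozen $x$, the $\sP^\nu$-semimartingale decomposition of $Y_t(x)$ has martingale part $V_t^\tp(x)\ud W_t$ and drift $-(\clA Y_t)(x) - h^\tp(x)V_t(x) + V_t^\tp(x)h(X_t)$. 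Composing with $X_t$ adds the generator drift $(\clA Y_t)(X_t)\ud t$ and the signal's own martingale increment $\ud M_t^X$; the It\^o--Wentzell cross term is the covariation between the field's martingale (driven by $W$) and the signal's martingale, which vanishes because $W\perp X$. The two scalar cancellations $-(\clA Y_t)(X_t)+(\clA Y_t)(X_t)=0$ and $-h^\tp(X_t)V_t(X_t)+V_t^\tp(X_t)h(X_t)=0$ then leave $\ud Y_t(X_t) = V_t^\tp(X_t)\ud W_t + \ud M_t^X$, so that $N_t:=Y_t(X_t)$ is a $\sP^\nu$-martingale.

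For the first claim I would read off the martingale identity between $t=0$ and $t=T$ and condition on $X_0$: since $Y_0$ is deterministic and $Y_T=\gamma_T$, this gives $Y_0(x)=\E^\nu(\gamma_T(X_T)\mid X_0=x)=y_0(x)$, matching~\eqref{eq:bmap}. In particular $\E^\nu(N_t)$ is constant in $t$ and equals $\nu(y_0)=1$, so $\var^\nu(Y_t(X_t))=\E^\nu(|N_t-1|^2)=\E^\nu(N_t^2)-1$.

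For the dissipation equation~\eqref{eq:var_contractive_ext} I would differentiate the second moment of the martingale $N_t$. From $\ud(N_t^2)=2N_t\,\ud N_t+\ud[N]_t$ and the martingale property, $\tfrac{\ud}{\ud t}\E^\nu(N_t^2)=\E^\nu\big(\tfrac{\ud}{\ud t}[N]_t\big)$. Using the decomposition above and again $W\perp X$ to kill the cross-variation, $\ud[N]_t = |V_t(X_t)|^2\,\ud t + \ud[M^X]_t$. The key identity is that the bracket of the signal martingale produced by composing the frozen function $Y_t$ with $X$ is exactly the carr\'e du champ, $\ud[M^X]_t=(\Gamma Y_t)(X_t)\,\ud t$, which is Definition~\ref{def:cdc} read through the martingale problem for $\clA$ (equivalently $\Gamma Y_t=\clA(Y_t^2)-2Y_t\clA Y_t$ is the drift defect of $Y_t^2(X_t)$). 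Finally, since $V_t(x)$ and $Y_t(x)$ are $\clZ_t$-measurable, the tower property with $\E^\nu(\,\cdot\,(X_t)\mid\clZ_t)=\pi_t^\nu(\cdot)$ converts $\E^\nu(|V_t(X_t)|^2)$ and $\E^\nu((\Gamma Y_t)(X_t))$ into $\E^\nu(\pi_t^\nu(|V_t|^2))$ and $\E^\nu(\pi_t^\nu(\Gamma Y_t))$, which is~\eqref{eq:var_contractive_ext}.

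The main obstacle is the rigor of the It\^o--Wentzell step: justifying the composition formula for the $\clZ$-adapted random field $Y_t(\cdot)$ along the correlated signal $X$ (the correlation entering only through the finite-variation drift $h(X_t)\ud t$ of $Z$), and confirming that $N_t$ is a genuine square-integrable martingale rather than merely a local one. The latter requires integrability of $\gamma_T(X_T)$ and of $\int_0^T|V_t(X_t)|^2\,\ud t$, which I would extract from $\chi^2(\mu\mid\nu)<\infty$ together with Novikov's condition in Assumption~1; the carr\'e du champ bracket identity in turn needs $Y_t\in\clD$ so that $\Gamma Y_t$ is well-defined.
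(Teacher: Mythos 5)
Your proposal is correct and follows essentially the same route as the paper's own proof: an It\^o formula applied to the composed process $Y_t(X_t)$, yielding the martingale decomposition $\ud Y_t(X_t) = V_t^\tp(X_t)\ud W_t + \ud N_t$, then conditioning on $X_0$ to identify $Y_0=y_0$, and finally computing the variance from the quadratic variation (carr\'e du champ plus $|V_t|^2$) together with the tower property $\E^\nu(\,\cdot\,(X_t)\mid\clZ_t)=\pi_t^\nu(\cdot)$. The only cosmetic difference is that the paper integrates the isometry from $t$ to $T$ and then differentiates, whereas you differentiate the second moment of the martingale directly; your explicit drift-cancellation and integrability remarks fill in details the paper delegates to a citation.
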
 

\medskip

\begin{proof}
See Appendix~\ref{apdx:pf-prop-bmap-bsde}.
\end{proof}
 
\medskip
 
\begin{remark}[Relationship to~\eqref{eq:jensens_ineq_stronger}]
Based on the dissipation equation~\eqref{eq:var_contractive_ext} in \Prop{prop:backward-map-and-bsde}, in order to obtain variance decay, a natural assumption is as follows:
\begin{align*}
  \text{(Assmp.)} \;\; \E^\nu \big( \pi_t^\nu(\Gamma Y_t) + \pi_t^\nu(|V_t|^2) \big) \geq c \; \var^\nu(Y_t(X_t)),\;
  0\leq t\leq T
\end{align*}
From \eqref{eq:var_contractive_ext} then $
\frac{\ud }{\ud t} \var^\nu (Y_t(X_t)) \geq c\,  \var^\nu(Y_t(X_t))
$. Because $Y_T(X_T)=\gamma_T(X_T)$ at the terminal time, Gronwall implies $\var^\nu (Y_0(X_0)) \le e^{-cT} \var^\nu(\gamma_T(X_T))$ from which filter stability follows (see~\Prop{prop:stronger_jensen_implies_filter_stability}).  In a prior conference paper~\cite{kim2021ergodic}, conditional Poincar\'e inequality (c-PI) is introduced for which the assumption above can be verified for $\nu=\bmu$.  Several examples of Markov processes are described for which the c-PI holds.  An important such example is the case where the Markov process satisfies the Doeblin condition. 
\end{remark}

\medskip

Our aim in the remainder of this paper is to define appropriate
notions of energy, variance, and Poincar\'e Inequality (PI) for HMM
(Defn.~\ref{def:PI_HMM}) and relate the PI to the model properties (\Sec{sec:model_props}). 

\section{Poincar\'e Inequality (PI) for HMM}\label{sec:main-results}

\subsection{Function spaces and notation}

Let $\rho\in\clP(\bS)$ and $\tau>0$.  These are used to denote a
generic prior and a generic time-horizon $[0,\tau]$. (In the analysis
of filter stability, these are fixed to $\rho=\nu$ and $\tau=T$).  The
space of Borel-measurable deterministic functions is denoted
\[
L^2(\rho)=\{f:\bS\to \Re \;:\;\rho(f^2) = \int_\bS |f(x)|^2 \ud\rho (x) <\infty\}
\]

\newP{Background from nonlinear filtering} 
A standard approach 
is based upon the Girsanov change of measure. Because the Novikov's
condition holds, define a new measure $\tsP^\rho$
on $(\Omega,\clF_\tau)$ as follows:
\[
\frac{\ud \tsP^\rho}{\ud \sP^\rho} = \exp\Big(-\int_0^\tau
h^\tp(X_t) \ud W_t - \half \int_0^\tau |h(X_t)|^2\ud t\Big) =: D_\tau^{-1}
\]
Then the probability law for $X$ is unchanged but
$Z$ is a $\tsP^\rho$-B.M.~that is independent of $X$~\cite[Lem.~1.1.5]{van2006filtering}. The
expectation with respect to $\tsP^\rho$ is denoted by
$\tE^\rho(\cdot)$.  The unnormalized filter $\sigma_\tau^\rho(f) :=
\tE^\rho(D_\tau f(X_\tau)|\clZ_\tau)$ for $f\in C_b(\bS)$.  It is
called as such because $\pi_\tau^\rho(f) = \frac{\sigma_\tau^\rho(f)}{\sigma_\tau^\rho(\ones)}$.
%The measure-valued process $\{\sigma_t^\rho:0\leq t\leq \tau\}$ is the solution of the DMZ equation.    

In a prior work, we introduced  a dual optimal control formulation of the nonlinear filter~\cite{kim2019duality,duality_jrnl_paper_II}.  This requires consideration of the following Hilbert spaces:

\newP{$\bullet$ Hilbert space for the dual}  Formally, the ``dual'' is a function on the state-space $\bS$.  The space of such functions is denoted as
$\clY$.  For the case when
$\bS=\{1,2,\hdots,d\}$, $\clY=\Re^d$.  Related to the dual, two types
of Hilbert spaces are of interest.  These are defined
as follows:
\begin{itemize}
\item Hilbert space of $\clZ_\tau$-measurable random functions:
\begin{align*}
\mathbb{H}_\tau^\rho & :=\{ F:\Omega\to \clY \; : \; F\in\clZ_\tau \;\;
                       \& \;\; \tE^\rho
  (\sigma_{\tau}^\rho (F^2)) < \infty\} 
\end{align*}
(This function space is important because 
the backward map~\eqref{eq:bmap} 
%$\gamma_T \mapsto y_0$ 
is a map from
$\gamma_T \in \mathbb{H}_T^\nu$ to $y_0\in L^2(\nu)$). 
\item Hilbert space of $\clY$-valued $\clZ$-adapted stochastic processes:
\begin{align*}
\mathbb{H}^\rho([0,\tau]) & :=\{ Y:\Omega\times [0,\tau] \to \clY  :
                            \;  Y_t\in\clZ_t, \;0\leq t\leq \tau, \\
%Y \;\text{is}\;\clZ-\text{adapted}\\
& \qquad\qquad 
                       \& \;\; \tE^\rho \left( \int_0^\tau 
  \sigma_{t}^\rho (Y_t^2) \ud t \right) < \infty\}
\end{align*}
(This function space is important because the solution $Y$ of the BSDE is an element of $\mathbb{H}^\rho([0,\tau])$). 
\end{itemize}

\newP{Notation} 
Let $\rho\in{\cal P}(\bS)$.  % The space of likelihood
% ratio is denoted by 
% \begin{align*}
% {\cal L}^\rho_T:=\big\{F\in
% L^2_{\clZ_T}(\Omega;\clY): F(x)\geq 0, \;x\in\bS  &\\
%   \qquad \text{and}\;
% \pi^\rho_T(F) = 1,\;\sP^\rho\text{-a.s.} &\big\} 
% \end{align*}
For real-valued functions $f,g\in \clY $, $
\clV_t^\rho(f,g) := \pi_t^\rho\big((f-\pi_t^\rho(f))(g-\pi_t^\rho(g))\big)
$. 
With $f=g$, $\clV_t^\rho(f) := \clV_t^\rho(f,f)$.

%\subsection{BSDE formulation for nonlinear filtering}

\subsection{Definitions of energy, variance, and PI}
\label{ssec:BSDE-formulae}

% In this section, we discuss a generalized formulae for~\eqref{eq:closed-loop-bsde-2} with general $Y_T \neq \gamma_T$.  The inspiration of the formula is coming from the dual optimal control problem for nonlinear filtering~\cite{duality_jrnl_paper_II}.

% Consider the BSDE:

%The formulation is used to introduce the following dual optimal control system:

\newP{Dual optimal control system}
\begin{align}
	-\!\ud Y_t(x) &= \big((\clA Y_t)(x) -  h^\tp (x) \clV_t^\rho(h,Y_t)
	\nonumber \\ &\;\;\;\; + h^\tp (x)(V_t(x) -
	\pi_t^\rho(V_t))\big)\ud t - V_t^\tp(x)\ud Z_t,\;\;0\leq t\leq \tau \nonumber \\
	Y_\tau (x)  & = F(x), \;\; x \in \bS \label{eq:optimal_control_system}
\end{align}
$(Y,V) \in \mathbb{H}^\rho([0,\tau])\times \mathbb{H}^\rho([0,\tau])^m$ is the solution of~\eqref{eq:optimal_control_system} for a given $F\in \mathbb{H}_\tau^\rho$.
The dual optimal control system is important because of the following
relationship to the nonlinear filter:

\medskip

\begin{proposition}[Prop.~1 in~\cite{duality_jrnl_paper_II}]
\label{prop:bsde-general-F}
Consider~\eqref{eq:optimal_control_system}. Then for a.e.~$t\in[0,\tau]$,
\begin{subequations}
	\begin{align}
			%		\text{\rm (cond.~mean):}\qquad\quad\;\;
		&	\pi_t^\rho (Y_t) = \rho (Y_0) +\int_0^t \big(\clV_s^\rho(h,Y_s)+
		\pi_s^\rho(V_s) \big)^\tp\ud Z_s,\quad\sP^\rho-\text{a.s.}\label{eq:estimator-t} \\
			%		\text{\rm (cond.~variance):}\qquad
		&	\E^\rho\big(\clV_t^\rho (Y_t)\big) = \label{eq:estimator-t-variance}\\
&\quad	\VV
		 + \E^\rho\Big(\int_0^t \pi_s^\rho(\Gamma Y_s) + |\clV_s^\rho(h,Y_s)|^2 +
		 \clV_s^\rho(V_s)\ud s\Big)  \nonumber
		\end{align}
\end{subequations}
\end{proposition}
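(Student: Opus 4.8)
The plan is to establish the two displayed identities by stochastic calculus: \eqref{eq:estimator-t} by applying an It\^o product rule to the paired process $t\mapsto \pi_t^\rho(Y_t)$, and \eqref{eq:estimator-t-variance} by differentiating $\E^\rho(\clV_t^\rho(Y_t)) = \E^\rho(\pi_t^\rho(Y_t^2)) - \E^\rho\big((\pi_t^\rho(Y_t))^2\big)$ term by term. Throughout I write $\ud Z_t = \pi_t^\rho(h)\ud t + \ud I_t$ with $I$ the $\sP^\rho$-innovation Brownian motion and $\ud Z_t = h(X_t)\ud t + \ud W_t$ under $\sP^\rho$, and I invoke \eqref{eq:Kushner} and the control system \eqref{eq:optimal_control_system}.

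For \eqref{eq:estimator-t}, I would expand $\ud\pi_t^\rho(Y_t)$ as three contributions: the \emph{measure part} $(\ud\pi_t^\rho)(Y_t)$ from \eqref{eq:Kushner} with the frozen test function $Y_t$, giving drift $\pi_t^\rho(\clA Y_t)\ud t$ and martingale $\clV_t^\rho(h,Y_t)^\tp\ud I_t$; the \emph{function part} $\pi_t^\rho(\ud Y_t)$ from integrating the BSDE \eqref{eq:optimal_control_system} against $\pi_t^\rho$, which supplies $-\pi_t^\rho(\clA Y_t)\ud t$, the $h$-dependent drifts coming from the $-h^\tp\clV_t^\rho(h,Y_t)$ and $h^\tp(V_t-\pi_t^\rho(V_t))$ entries, and the martingale $\pi_t^\rho(V_t)^\tp\ud I_t$; and the cross-variation of the two martingale parts, which evaluates to $\big[\pi_t^\rho(h^\tp V_t)-\pi_t^\rho(h)^\tp\pi_t^\rho(V_t)\big]\ud t$. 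The content of the calculation is that the two $\pi_t^\rho(\clA Y_t)$ terms cancel and the cross-variation cancels the corresponding covariance drift from the function part, so the surviving drift reorganizes exactly into $\big(\clV_t^\rho(h,Y_t)+\pi_t^\rho(V_t)\big)^\tp\pi_t^\rho(h)\,\ud t$; together with the martingale it assembles into $\big(\clV_t^\rho(h,Y_t)+\pi_t^\rho(V_t)\big)^\tp\ud Z_t$. Integrating from $0$ and using $\pi_0^\rho=\rho$ gives \eqref{eq:estimator-t}.

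For \eqref{eq:estimator-t-variance}, the key observation is that $\E^\rho(\pi_t^\rho(Y_t^2)) = \E^\rho(Y_t(X_t)^2)$ by the tower property, which lets me avoid a second measure--function product rule and instead apply It\^o directly to $Y_t(X_t)^2$ along the joint process $(X_t,Y_t,V_t)$. Because $W$ is independent of $X$ (Assumption~1), the $X$-martingale and the $Z$-driven BSDE martingale have zero cross-variation; the $(\clA Y_t)(X_t)\ud t$ produced by the motion of $X_t$ cancels the $-(\clA Y_t)(X_t)\ud t$ in the BSDE drift, and the quadratic-variation term splits cleanly as $\big[\Gamma(Y_t)(X_t)+|V_t(X_t)|^2\big]\ud t$, the first from the carr\'e du champ identity for the $X$-martingale and the second from the $Z$-integral. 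Taking $\E^\rho$ and conditioning the surviving mixed term on $\clZ_t$ converts $\E^\rho\big(Y_t(X_t)h(X_t)^\tp(\cdots)\big)$ into $\E^\rho\big(\pi_t^\rho(hY_t)^\tp(\cdots)\big)$. For the second piece I differentiate $\E^\rho\big((\pi_t^\rho(Y_t))^2\big)$ using \eqref{eq:estimator-t}: its quadratic variation is $|\clV_t^\rho(h,Y_t)+\pi_t^\rho(V_t)|^2\ud t$, and the remaining drift is again reduced by conditioning on $\clZ_t$.

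Subtracting the two time-derivatives, I expect the mixed terms to cancel after substituting $\pi_t^\rho(hY_t)=\clV_t^\rho(h,Y_t)+\pi_t^\rho(h)\pi_t^\rho(Y_t)$, leaving precisely $\pi_t^\rho(\Gamma Y_t)+|\clV_t^\rho(h,Y_t)|^2+\big[\pi_t^\rho(|V_t|^2)-|\pi_t^\rho(V_t)|^2\big]$, where the last bracket is $\clV_t^\rho(V_t)$. Integrating from $0$ to $t$ and identifying the constant $\clV_0^\rho(Y_0)=\rho(Y_0^2)-\rho(Y_0)^2=\var^\rho(Y_0(X_0))=\VV$ yields \eqref{eq:estimator-t-variance}. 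The main obstacle is not any single cancellation but the rigorous setup: getting the It\^o/product rules right for the measure-valued filter paired with the function-valued BSDE solution---in particular the cross-variation term---and justifying that all stochastic integrals are genuine martingales so the expectations of their increments vanish. The latter is where the integrability built into the Hilbert spaces $\mathbb{H}^\rho([0,\tau])$ and $\mathbb{H}_\tau^\rho$, together with Novikov's condition, must be invoked.
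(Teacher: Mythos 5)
Your proposal is sound, but note a structural fact first: the paper itself contains no proof of this proposition --- it is imported verbatim as Prop.~1 of the companion work \cite{duality_jrnl_paper_II}, so there is no in-paper argument to compare against line by line. Judged on its own terms, your calculation is correct. For \eqref{eq:estimator-t}, the three-way decomposition (measure part via \eqref{eq:Kushner} with $Y_t$ frozen, function part via \eqref{eq:optimal_control_system}, plus the cross-variation $\big[\pi_t^\rho(h^\tp V_t)-\pi_t^\rho(h)^\tp\pi_t^\rho(V_t)\big]\ud t$) does make every drift cancel: the two $\pi_t^\rho(\clA Y_t)$ terms, the $\pi_t^\rho(h)^\tp\clV_t^\rho(h,Y_t)$ terms, and the covariance-type terms all annihilate, leaving exactly $\big(\clV_t^\rho(h,Y_t)+\pi_t^\rho(V_t)\big)^\tp\ud Z_t$. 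For \eqref{eq:estimator-t-variance}, your algebra also checks out: writing $a=\clV_s^\rho(h,Y_s)$, $b=\pi_s^\rho(V_s)$, the subtraction produces $2a^\tp(a+b)-|a+b|^2=|a|^2-|b|^2$, which combined with $\pi_s^\rho(|V_s|^2)$ and $\pi_s^\rho(\Gamma Y_s)$ gives precisely $|\clV_s^\rho(h,Y_s)|^2+\clV_s^\rho(V_s)+\pi_s^\rho(\Gamma Y_s)$, and $\clV_0^\rho(Y_0)=\rho(Y_0^2)-\rho(Y_0)^2=\VV$ since $\clZ_0$ is trivial. It is also worth pointing out that your treatment of the first piece --- It\^o applied to $Y_t(X_t)$, cancellation of $\pm(\clA Y_t)(X_t)$, the carr\'e du champ identity for the quadratic variation of the $X$-martingale, and vanishing cross-variation by independence of $W$ and $X$ --- is exactly the technique the paper does use in Appendix~\ref{apdx:pf-prop-bmap-bsde} to prove Prop.~\ref{prop:backward-map-and-bsde}, which is the special case of this proposition on the subspace where $\clV_t^\rho(h,Y_t)+\pi_t^\rho(V_t)=0$ (cf.~Remark~\ref{rem:BSDE_embedding_OCS}); so your route is the natural extension of the paper's own toolkit to general terminal conditions $F$. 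The one place deserving more than a sketch is the point you flagged yourself: the identity $\ud[M]_t=(\Gamma Y_t)(X_t)\ud t$ is standard for a fixed test function but requires justification when $Y_t$ is a random, $\clZ$-adapted, time-varying function (the paper leans on \cite[Remark~1]{duality_jrnl_paper_II} for exactly this), and the martingale property of all stochastic integrals must be secured by the $\mathbb{H}^\rho([0,\tau])$ integrability and Novikov's condition, as you indicate.
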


\medskip

\begin{remark}\label{rem:BSDE_embedding_OCS}
The BSDE embedding~\eqref{eq:closed-loop-bsde-2} of the backward map~\eqref{eq:bmap}
is a special case of~\eqref{eq:optimal_control_system}. In particular,
with $F=\gamma_T$, using~\eqref{eq:estimator-t} with $\tau=T$ and $\rho=\nu$,
\[
\clV_t^\nu(h,Y_t)+ \pi_t^\nu(V_t) = 0,\quad \sP^\nu\text{-a.s.}, \; 0\le t\le T
\]
(because $\pi_T^\nu(\gamma_T)=1$). 
Therefore,~\eqref{eq:optimal_control_system} reduces
to~\eqref{eq:closed-loop-bsde-2}. 
\end{remark}

\medskip

Let $
\clN:= \{\rho\in\clP(\bS): \VV=0 \;\;\forall F\;\in \mathbb{H}_\tau^\rho\}
$.

\medskip

\begin{definition}\label{def:PI_HMM}
Consider~\eqref{eq:optimal_control_system}.  {\em Energy} is defined as follows:
\begin{align*}
\II^\rho(F)&:= \E^\rho \left( \int_0^\tau \pi_t^\rho(\Gamma Y_t) + |\clV_t^\rho(h,Y_t)|^2 +
  \clV_t^\rho(V_t) 
             \ud t \right)
%\\
%\VV^\rho(F)&:= \E^\rho\big(\pi_\tau^\rho(|F-1|^2)\big)
\end{align*}
For $\rho\in\clP(\bS)\setminus \clN$, consider 
\[
\beta^\rho:= \inf\big\{\II^\rho(F) \; : \;F\in \mathbb{H}_\tau^\rho \;\; \& \;\;\VV=1\big\}
\]
and the {\em Poincar\'e constant} is defined as follows:
\[
c^\rho:=\begin{cases}
	\dfrac{1}{\tau}\log\big(1+\beta^\rho\big),\quad &\rho\in\clP(\bS)\setminus \clN\\
	0,\quad & \rho\in\clN
\end{cases}
\]
\end{definition}

\medskip

\begin{remark}\label{rm:meaning-of-crho}
The reason for defining the Poincar\'e constant in this manner is that $c^\rho$ then represents a rate.  In particular, using~\eqref{eq:estimator-t-variance}, for each $\rho \in \clP(\bS)\setminus \clN$,
\[
\VV \le e^{-\tau \, c^\rho} \E^\rho\big(\clV_\tau^\rho(F)\big),\quad \forall\, F\in \mathbb{H}_\tau^\rho 
\]
\end{remark}

\subsection{Existence of minimizer}

We are interested in existence of the minimizers of
the energy functional $\II^\rho(F)$ for $F\in \mathbb{H}_\tau^\rho$.  If it exists, a minimizer is not unique
because of the following translation symmetry:
\[
\II^\rho(F+ \alpha\ones) = \II^\rho (F)%,\quad \forall \; C\in \mathbb{H}_\tau^\rho
\]
for any $\clZ_\tau$-measurable random variable $\alpha$ such that
$\tE^\rho (\alpha^2)<\infty$.
For this reason, consider the subspace
\[
{\cal S}^\rho := \{F\in \mathbb{H}_\tau^\rho \;:\;
\pi_\tau^\rho(F)=0, \;\; \sP^\rho-a.s.\}
\]
Then $\cal S^\rho$ is closed subspace. (Suppose $F^{(n)} \to F$ in
$\mathbb{H}_\tau^\rho$ with $\pi^\rho_\tau\big(F^{(n)}\big) = 0$. Then   
$\E^\rho\big(|\pi^\rho_\tau(F)|\big) = \E^\rho\big(|\pi^\rho_\tau(F-F^{(n)})|\big) \leq
\E^\rho\big(\pi^\rho_\tau(|F-F^{(n)}|^2)\big) = \tE^\rho\big(\sigma_\tau^\rho
(|F-F^{(n)}|^2)\big) = \| F-F^{(n)} \|_{\mathbb{H}_\tau^\rho}\to 0$.). 

\medskip

\begin{proposition}\label{prop:BSDE_on_S}
Consider the optimal control system~\eqref{eq:optimal_control_system} with $Y_T=F\in{\cal S}^\rho$. Then $\rho(Y_0) = 0$ and
\[
\clV_t^\rho(h,Y_t)+ \pi_t^\rho(V_t) = 0,\quad \sP^\rho\text{-a.s.}, \; 0\le t\le \tau
\]
%\begin{enumerate}
%\item The optimal control $U^\opt=0$. 
%\item At time $t=0$, $\rho(Y_0)=0$.
%\end{enumerate}
\end{proposition}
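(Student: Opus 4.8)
The plan is to deduce both claims directly from the estimator evolution~\eqref{eq:estimator-t} of~\Prop{prop:bsde-general-F}, after passing to the Girsanov-transformed measure $\tsP^\rho$ under which $Z$ is a Brownian motion independent of $X$. Writing $\phi_s := \clV_s^\rho(h,Y_s) + \pi_s^\rho(V_s)$, equation~\eqref{eq:estimator-t} states that $\pi_t^\rho(Y_t) = \rho(Y_0) + \int_0^t \phi_s^\tp \ud Z_s$, $\sP^\rho$-a.s. Since $\sP^\rho$ and $\tsP^\rho$ are mutually absolutely continuous (the Girsanov density $D_\tau^{-1}$ is strictly positive), this identity also holds $\tsP^\rho$-a.s., and because $Z$ is a $\tsP^\rho$-Brownian motion the stochastic integral $N_t := \int_0^t \phi_s^\tp \ud Z_s$ is a $\tsP^\rho$-local martingale. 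The integrability carried by $F\in\mathbb{H}_\tau^\rho$ (hence $Y,V\in\mathbb{H}^\rho([0,\tau])$, with the energy terms in~\eqref{eq:estimator-t-variance} finite) should upgrade $N$ to a genuine square-integrable martingale, so that $\tE^\rho(N_\tau)=0$ and the It\^o isometry applies.

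First I would invoke the defining property of the terminal condition: since $F\in{\cal S}^\rho$, we have $\pi_\tau^\rho(Y_\tau) = \pi_\tau^\rho(F) = 0$, $\sP^\rho$-a.s., and hence $\tsP^\rho$-a.s. Taking $\tsP^\rho$-expectations in $\pi_\tau^\rho(Y_\tau) = \rho(Y_0) + N_\tau$ and using $\tE^\rho(N_\tau)=0$ gives $\rho(Y_0)=0$, which is the first assertion.

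With $\rho(Y_0)=0$ established, the same identity at $t=\tau$ forces $N_\tau=0$, $\tsP^\rho$-a.s. The It\^o isometry under $\tsP^\rho$ then yields
\[
0 = \tE^\rho\big(N_\tau^2\big) = \tE^\rho\Big(\int_0^\tau |\phi_s|^2 \ud s\Big),
\]
so that $\phi_s = \clV_s^\rho(h,Y_s) + \pi_s^\rho(V_s) = 0$ for a.e.~$s\in[0,\tau]$, $\tsP^\rho$-a.s., and therefore $\sP^\rho$-a.s. This is the second assertion. The argument is the exact analogue of the computation in~Rem.~\ref{rem:BSDE_embedding_OCS}, where the terminal value was $1$ rather than $0$.

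The main obstacle is the integrability bookkeeping that legitimizes the two martingale manipulations: showing $\tE^\rho\big(\int_0^\tau |\phi_s|^2\ud s\big)<\infty$ so that $N$ is a true martingale with $\tE^\rho(N_\tau)=0$ and the It\^o isometry is valid. The term $\clV_s^\rho(h,Y_s)$ is controlled by the finiteness of the energy $\II^\rho(F)$ appearing in~\eqref{eq:estimator-t-variance}, while $\pi_s^\rho(V_s)$ must be bounded in $L^2$ using the $\mathbb{H}^\rho([0,\tau])$-norm of $V$ (via $|\pi_s^\rho(V_s)|^2\le \pi_s^\rho(|V_s|^2)$ and the definition of $\mathbb{H}^\rho([0,\tau])$). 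Once these bounds are in place the remaining steps are immediate.
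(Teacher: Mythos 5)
Your proof is correct and follows the same route as the paper: the paper's own (one-line) proof deduces both claims from~\eqref{eq:estimator-t} of \Prop{prop:bsde-general-F} exactly as in Rem.~\ref{rem:BSDE_embedding_OCS}, i.e., the deterministic terminal value $\pi_\tau^\rho(F)=0$ forces the constant part $\rho(Y_0)$ to vanish and the martingale part to be identically zero. You have simply made explicit the steps the paper leaves implicit (passing to $\tsP^\rho$ under which $Z$ is a Brownian motion, the expectation argument, and the It\^o isometry), which is a faithful elaboration rather than a different argument.
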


\medskip

\begin{proof}
The result follows from using~\eqref{eq:estimator-t} in
Prop.~\ref{prop:bsde-general-F} (similar to Rem.~\ref{rem:BSDE_embedding_OCS}). 
\end{proof}

\medskip

Because of Prop.~\ref{prop:BSDE_on_S}, for $Y_T=F\in{\cal
  S}^\rho$,~\eqref{eq:optimal_control_system} simplifies to 
\begin{align}
	-\ud Y_t(x) &= \big((\clA Y_t)(x) + h^\tp(x)V_t(x)\big)\ud t -
                      V_t^\tp(x) \ud Z_t, \;\;0\leq t\leq \tau  \nonumber\\
	\quad Y_\tau &= F \in {\cal S}^\rho,\;\; x\in\bS \label{eq:optimal_control_system_on_S}
\end{align}
Note that this is identical to the BSDE embedding~\eqref{eq:closed-loop-bsde-2} of the backward map.
% The optimal control system~\eqref{eq:optimal_control_system_on_S} is a
% linear system that does not depend upon $\rho$ (even
% though~\eqref{eq:optimal_control_system} does). 
Its solution is used to define a linear operator as follows:
\begin{align*}
% \LL&:
% L^2_{\clZ_\tau}(\Omega;\clY)\to L^2_{\clZ}([0,\tau];\clY\times \clY^m) \;\;\text{by}\;\;\LL(F) :=
%   (Y,V) \\
\LL_0&: {\cal S}^\rho \subset 
\mathbb{H}_\tau^\rho \to  L^2(\rho) \quad \text{by}\quad \LL_0(F) := Y_0
\end{align*}
% (The superscript is included to note that the operator depends upon
% $\rho$).  
(It is noted that~\eqref{eq:optimal_control_system_on_S} and therefore $\LL_0$ do not depend upon $\rho$ even though the optimal control system~\eqref{eq:optimal_control_system} does). 
Additional details concerning this operator appear in
Appendix~\ref{apdx:pf-lemma-minimizer} where it is shown that $\LL_0$ is bounded
with $\|\LL_0\|\leq 1$. 

\medskip

The following Lemma provides sufficient condition for a minimizer to exist:
\begin{lemma}\label{lm:minimizer}
%Consider the optimization problem used to define $c^\rho$.  
Let $\rho \in{\cal P}(\bS) \setminus \clN$. Suppose that $\LL_0$ is compact. Then
there exists an ${F} \in {\cal S}^\rho $ such that 
\[
\beta^\rho = \II^\rho({F}) \quad\text{and}\quad \VV=1
\]
%In this case, $\rho \mapsto c^\rho$ is continuous as a map from
%$\rho\in\clP(\bS)$ equipped with the TV topology to $c^\rho \in \Re$.  
\end{lemma}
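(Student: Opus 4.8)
The plan is to recast the constrained problem defining $\beta^\rho$ as the attainment of an operator norm and then run the direct method of the calculus of variations, with compactness of $\LL_0$ supplying the single non-routine ingredient.

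First I would rewrite the objective on ${\cal S}^\rho$. For $F\in{\cal S}^\rho$, Prop.~\ref{prop:BSDE_on_S} gives $\rho(Y_0)=0$ and reduces \eqref{eq:optimal_control_system} to the BSDE \eqref{eq:optimal_control_system_on_S}, so $Y_0=\LL_0 F$ and $\VV=\rho(Y_0^2)=\|\LL_0 F\|_{L^2(\rho)}^2$. Meanwhile \eqref{eq:estimator-t-variance} at $t=\tau$ reads $\E^\rho(\clV_\tau^\rho(F))=\VV+\II^\rho(F)$, and since $\pi_\tau^\rho(F)=0$ we have $\clV_\tau^\rho(F)=\pi_\tau^\rho(F^2)$, whence $\E^\rho(\clV_\tau^\rho(F))=\E^\rho(\pi_\tau^\rho(F^2))=\|F\|_{\mathbb{H}_\tau^\rho}^2$. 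Combining,
\[
\II^\rho(F)=\|F\|_{\mathbb{H}_\tau^\rho}^2-\|\LL_0 F\|_{L^2(\rho)}^2,\qquad F\in{\cal S}^\rho .
\]
Both $\II^\rho$ and the constraint functional $\VV$ are invariant under $F\mapsto F+\alpha\ones$ (for $\II^\rho$ this is the stated translation symmetry; for $\VV$ it holds because shifting the terminal datum by $\alpha\ones$ shifts $Y_0$ only by a deterministic constant), so the infimum defining $\beta^\rho$ is unchanged when restricted to ${\cal S}^\rho$, and I work on ${\cal S}^\rho$ throughout. On the feasible set $\{\|\LL_0 F\|_{L^2(\rho)}^2=1\}$ the identity yields $\II^\rho(F)=\|F\|_{\mathbb{H}_\tau^\rho}^2-1$, so minimizing $\II^\rho$ subject to $\VV=1$ is exactly minimizing $\|F\|_{\mathbb{H}_\tau^\rho}^2$ subject to $\|\LL_0 F\|_{L^2(\rho)}=1$.

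Next I run the direct method. Pick a minimizing sequence $\{F^{(n)}\}\subset{\cal S}^\rho$ with $\|\LL_0 F^{(n)}\|_{L^2(\rho)}^2=1$ and $\II^\rho(F^{(n)})\to\beta^\rho$; the feasible set is nonempty precisely because $\rho\notin\clN$ (some $F$ has $\VV>0$, which rescales to $\VV=1$). Since $\Gamma\ge 0$ and the variance terms are nonnegative, $\II^\rho\ge 0$, hence $\|F^{(n)}\|_{\mathbb{H}_\tau^\rho}^2=\II^\rho(F^{(n)})+1\to\beta^\rho+1$ is bounded. As $\mathbb{H}_\tau^\rho$ is a Hilbert space and ${\cal S}^\rho$ a closed (hence weakly closed) subspace, I extract a subsequence $F^{(n_k)}\rightharpoonup F^\star$ weakly with $F^\star\in{\cal S}^\rho$. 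The two limits are handled separately. For the constraint I use compactness of $\LL_0$: a compact operator carries weakly convergent sequences to norm-convergent ones, so $\LL_0 F^{(n_k)}\to\LL_0 F^\star$ strongly in $L^2(\rho)$ and therefore $\|\LL_0 F^\star\|_{L^2(\rho)}^2=1$, i.e.\ $F^\star$ is feasible and $\VV=1$ at $F^\star$. For the objective, weak lower semicontinuity of the Hilbert norm gives $\|F^\star\|_{\mathbb{H}_\tau^\rho}^2\le\liminf_k\|F^{(n_k)}\|_{\mathbb{H}_\tau^\rho}^2=\beta^\rho+1$, so $\II^\rho(F^\star)=\|F^\star\|_{\mathbb{H}_\tau^\rho}^2-1\le\beta^\rho$; together with feasibility (which forces $\II^\rho(F^\star)\ge\beta^\rho$) this gives $\II^\rho(F^\star)=\beta^\rho$. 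Taking $F=F^\star$ proves the lemma.

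The main obstacle, and the sole place the hypothesis enters, is the passage to the limit in the constraint $\|\LL_0 F\|_{L^2(\rho)}=1$. The feasible set is the preimage of a sphere under $\LL_0$, and such sets are not weakly closed for a merely bounded operator: weak lower semicontinuity would only deliver $\|\LL_0 F^\star\|_{L^2(\rho)}\le 1$, so the weak limit could escape the constraint and no minimizer need exist (the infimum may equal the unattained value $1/\|\LL_0\|^2-1$). Compactness of $\LL_0$ upgrades the convergence of $\LL_0 F^{(n_k)}$ from weak to strong and thereby restores exact feasibility of $F^\star$; equivalently, it guarantees that $\sup\{\|\LL_0 F\|_{L^2(\rho)}:\|F\|_{\mathbb{H}_\tau^\rho}=1\}=\|\LL_0\|$ is attained. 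Everything else (the algebraic reformulation, boundedness of the minimizing sequence, and weak lower semicontinuity of the norm) is routine.
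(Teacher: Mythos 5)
Your proof is correct, and it shares the paper's overall skeleton: a normalized minimizing sequence in $\clS^\rho$, boundedness via the identity $\|F\|_{\mathbb{H}_\tau^\rho}^2 = \rho(Y_0^2)+\II^\rho(F)$ (the paper's Eq.~\eqref{eq:norm-equation}), weak compactness plus weak closedness of the closed subspace $\clS^\rho$, and compactness of $\LL_0$ to upgrade $\LL_0F^{(n_k)}$ to strong $L^2(\rho)$ convergence so that the constraint $\VV=1$ survives the passage to the limit. The genuine difference is the final step, showing $\II^\rho(F^\star)\le\beta^\rho$. The paper gets weak lower semicontinuity of $\II^\rho$ from convexity: it computes the functional derivative $\langle\nabla\II^\rho(F),\tilde F\rangle$, shows it is a bounded linear functional (Eq.~\eqref{eq:bdd_lin_fn}), and uses the first-order convexity inequality $\II^\rho(F^{(n)})\ge\II^\rho(F)+\langle\nabla\II^\rho(F),F^{(n)}-F\rangle$, whose linear term vanishes along the weakly convergent sequence. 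You instead use the identity $\II^\rho(F)=\|F\|_{\mathbb{H}_\tau^\rho}^2-\|\LL_0F\|_{L^2(\rho)}^2$ on $\clS^\rho$, so that on the feasible set the objective is just $\|F\|_{\mathbb{H}_\tau^\rho}^2-1$ and weak lower semicontinuity of the Hilbert norm finishes the proof. Your route is more elementary: it eliminates the gradient computation and the convexity argument from the appendix, and it makes explicit that the minimization is exactly the attainment of the operator norm of the compact operator $\LL_0$ on $\clS^\rho$ (so $\beta^\rho=\|\LL_0|_{\clS^\rho}\|^{-2}-1$), which is a nice structural insight. What the paper's argument buys in exchange is robustness: the convexity/bounded-derivative mechanism does not depend on the objective being exactly a difference of squared norms, so it would extend to perturbed or more general energy functionals where no such algebraic identity is available. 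A further small point in your favor: you justify restricting the infimum to $\clS^\rho$ by checking that both $\II^\rho$ and the constraint $\VV$ are invariant under $F\mapsto F+\alpha\ones$, a step the paper leaves implicit when it takes its minimizing sequence inside $\clS^\rho$.
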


\medskip

\begin{proof}
See Appendix~\ref{apdx:pf-lemma-minimizer}.
\end{proof}

\section{PI and HMM model properties}\label{sec:model_props}

In this section, we make the following assumption:

\newP{Assumption 2}  The state-space is finite:
\begin{align*}
	\textbf{(A2)}\qquad	\bS = \{1,2,\hdots,d\}
\end{align*}

\newP{Notation}
The space of functions and measures are both identified with $\Re^d$: a real-valued function $f$ (resp., a measure $\mu$) is identified with a column vector in $\Re^d$ where the $x^{\text{th}}$ element of the vector equals $f(x)$ (resp., $\mu(x)$) for $x\in\bS$, and $\mu(f) = \mu^\tp f$.  In this manner, the
observation function $h:\bS\to\Re^m$ is also identified with a matrix
$H\in\Re^{d\times m}$.  Its $j$-th column is denoted $H^j$ for $j=1,2,\hdots,m$.  The constant function $\ones = [1,1,\hdots,1]$ is a $d$-dimensional vector with all entries equal to one.  $\clP(\bS)$ is the probability simplex in
$\Re^d$.   The generator $\clA$ is identified with a transition rate matrix, denoted as
$A$, whose $(i,j)$ entry (for $i\neq j$) gives the non-negative rate of transition from state
$i\mapsto j$.  The diagonal
entry $(i,i)$ is chosen such that the sum of the elements in the
$i$-th row is zero.   The finite state-space HMM is denoted as $(A,H)$.  For any function $g:\bS\to\Re$, the notation
\[
hg :=\{\text{diag}(H^j)g : j=1,2,\hdots,m\}
\]  
For $m=1$, this is simply the element-wise multiplication of the function $h$ and $g$ ($hg(x)=h(x)g(x)$ for $x\in\bS$).  

\medskip

\begin{definition} \label{def:obsvbl} Consider an HMM $(A,H)$ on a finite state-space $\bS=\{1,2,\hdots,d\}$. The space of
  {\em observable functions} is the smallest subspace $\clO\subset
  \Re^d$ that satisfies the following two properties:  
	\begin{enumerate}
		\item The constant function $\ones\in \clO$; and
		\item If $g\in\clO$ then $A g \in \clO$ and $hg 
		\in\clO$.
\end{enumerate}
The space of {\em null eigenfunctions} is defined as
\[
S_0 := \{f \in \Re^d \mid \; A f = 0 \} 
\]
\end{definition}

\medskip

\begin{definition}\label{def:ergodic}
Consider an HMM $(A,H)$ on a finite state-space $\bS=\{1,2,\hdots,d\}$.
\begin{enumerate}
\item The HMM is {\em observable} if $\clO = \Re^d$. 
\item The Markov process is {\em ergodic} if
\begin{equation*}\label{eq:PI_MP_0}
A f =0\;\; \implies f= c\ones
\end{equation*}
\item The HMM is {\em detectable} if
$
S_0 \subset \clO
$. 
\end{enumerate}
\end{definition}

\begin{remark}
For additional motivation and background for these definitions, see~\cite[Sec.~IV]{duality_jrnl_paper_I} and~\cite[Ch.~8]{JinPhDthesis}. It is shown (see~\cite[Rem.~13]{duality_jrnl_paper_I}) that the definition is equivalent to the (standard) definition of observability and detectability of HMM introduced in~\cite{van2009observability}. 
\end{remark}

\begin{example}
Consider an HMM on $\bS=\{1,2\}$ with 
\begin{equation*}
A = \begin{bmatrix} -\lambda_{12} & \lambda_{12} \\ \lambda_{21} &    -\lambda_{21} \end{bmatrix},\quad
H =  \begin{bmatrix} h(1) \\
  h(2) \end{bmatrix}
\end{equation*}
For this model, the carr\'e du champ operator and the observable space are as follows:
\[
\Gamma f = \begin{bmatrix} \lambda_{12} \\
    \lambda_{21}\end{bmatrix} (f(1)-f(2))^2,\quad \clO = \text{span} \left\{ \begin{bmatrix} 1 \\ 1 \end{bmatrix}, \begin{bmatrix} h(1)\\h(2) \end{bmatrix}\right\}
\]
Consequently,
\begin{enumerate}
\item $A$ is 
ergodic iff $(\lambda_{12}+\lambda_{21})>0$.  In this case,
the invariant measure $\bmu = \begin{bmatrix}
  \frac{\lambda_{21}}{(\lambda_{12}+\lambda_{21})} & 
  \frac{\lambda_{12}}{(\lambda_{12}+\lambda_{21})} \end{bmatrix}^\transpose$. 
\item $(A,H)$ is  is observable iff $h(1)\neq h(2)$. 
\end{enumerate}
\end{example}

\subsection{Main result}

\begin{proposition}[\textbf{Answer to Q3 in \Sec{sec:intro}}]\label{prop:sufficiency}
Consider the HMM $(A,H)$ on finite state-space and $\rho\in
\clP(\bS) \setminus \clN$.  Suppose any one of
the following conditions holds:
\begin{enumerate}
\item The Markov process is ergodic; or
\item The HMM is observable; or
\item The HMM is detectable.
\end{enumerate}
Then $c^\rho>0$.
\end{proposition}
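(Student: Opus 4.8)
The plan is to show $c^\rho>0$ by establishing $\beta^\rho>0$ and reading the latter off the dissipation identity \eqref{eq:estimator-t-variance}. First I would use the translation symmetry $\II^\rho(F+\alpha\ones)=\II^\rho(F)$ to restrict the infimum defining $\beta^\rho$ to the closed subspace $\Sscr^\rho$. On $\Sscr^\rho$ one has $\pi_\tau^\rho(F)=0$, so \eqref{eq:estimator-t-variance} evaluated at $t=\tau$ reads $\E^\rho(\clV_\tau^\rho(F))=\var^\rho(Y_0(X_0))+\II^\rho(F)$, that is $\II^\rho(F)=\|F\|_{\mathbb{H}_\tau^\rho}^2-\|\LL_0 F\|_{L^2(\rho)}^2\ge 0$, under the constraint $\var^\rho(Y_0(X_0))=\|\LL_0 F\|^2=1$. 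Since $\LL_0$ takes values in the finite-dimensional space $L^2(\rho)\cong\Re^d$ it is finite-rank, hence compact, so \Lemma{lm:minimizer} applies and the infimum is attained. Because $\II^\rho\ge0$ gives $\beta^\rho\ge0$, we have $c^\rho>0$ iff $\beta^\rho>0$, and the latter holds iff there is no nonzero $F\in\Sscr^\rho$ with $\II^\rho(F)=0$ (such an $F$ satisfies $\|\LL_0F\|=\|F\|\ne0$, hence $\var^\rho(Y_0(X_0))>0$). It therefore suffices to prove: $\II^\rho(F)=0$ with $F\in\Sscr^\rho$ forces $\var^\rho(Y_0(X_0))=0$.

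Next I would extract pointwise information. The three integrands in $\II^\rho$ are nonnegative, so each vanishes for a.e.\ $t$, $\sP^\rho$-a.s.: $\pi_t^\rho(\Gamma Y_t)=0$, $\clV_t^\rho(h,Y_t)=0$, and $\clV_t^\rho(V_t)=0$. Combining $\clV_t^\rho(h,Y_t)=0$ with the identity $\clV_t^\rho(h,Y_t)+\pi_t^\rho(V_t)=0$ of \Prop{prop:BSDE_on_S} yields $\pi_t^\rho(V_t)=0$, and then $\pi_t^\rho(|V_t|^2)=|\pi_t^\rho(V_t)|^2+\clV_t^\rho(V_t)=0$; hence $V_t=0$ and $\Gamma Y_t=0$ both hold $\pi_t^\rho$-a.s. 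These are the structural conditions to be propagated in each of the three cases.

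When the Markov process is ergodic, $A$ is irreducible, so the law of $X_t$ started from $\rho$ has full support for every $t\in(0,\tau]$; as the observation likelihood is strictly positive, $\pi_t^\rho$ then has full support a.s.\ on $(0,\tau]$. Thus $\Gamma Y_t=0$ holds at every state, which for a finite-state generator means $Y_t(y)=Y_t(x)$ whenever $A(x,y)>0$; irreducibility (strong connectivity of the transition graph) forces $Y_t=c_t\ones$ for $t\in(0,\tau]$. Continuity of the $Y$-component of the BSDE solution then gives $Y_0=c_0\ones$, so $\var^\rho(Y_0(X_0))=0$, the desired contradiction. This settles case (1).

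For the observable and detectable cases the support of $\pi_t^\rho$ need not be full, so $\Gamma Y_t=0$ alone is inconclusive and the covariance condition must be used. Here I would target directly the only quantity that matters, $\clV_0^\rho(Y_0)=\var^\rho(Y_0(X_0))$ (recall $\pi_0^\rho=\rho$), so no statement about $Y_0$ off $\mathrm{supp}\,\rho$ is needed. The strategy is to track $\phi_t^g:=\clV_t^\rho(g,Y_t)$ for deterministic $g$ and to compute $\ud\phi_t^g$ from the Kushner equation \eqref{eq:Kushner} and the reduced BSDE \eqref{eq:optimal_control_system_on_S}; under $V_t=0$ and $\Gamma Y_t=0$ on the support, the drift of $\phi_t^g$ should close on $\{\clA g,\,hg\}$, so that $\phi_t^g\equiv0$ propagates from the seeds $g=\ones$ (trivial) and $g=h$ (the condition $\clV_t^\rho(h,Y_t)=0$) to every $g$ in the observable space $\clO$ of Defn.~\ref{def:obsvbl}. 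Evaluating at $t=0$ and letting $g$ range over $\clO$ gives $\clV_0^\rho(g,Y_0)=0$ for all $g\in\clO$; if the HMM is observable then $\clO=\Re^d$ and hence $\clV_0^\rho(Y_0)=0$, finishing case (2). If the HMM is only detectable, I would use $S_0\subset\clO$ to split $Y_0$: its component along the harmonic directions $\ker A=S_0$ is already pinned by the observable relations, while the complementary component lies among the transient, strictly $A$-contracting modes, and combining this contraction with the fact that $\phi_t^g\equiv0$ holds on all of $[0,\tau]$ (not merely at $\tau$) should force the remaining contribution to $\clV_0^\rho(Y_0)$ to vanish. The principal obstacle is precisely this last step together with the It\^o book-keeping behind the generation claim: one must justify the closure of the drift of $\phi_t^g$ on $\{\clA g,\,hg\}$ through the coupled Kushner--BSDE calculus while $\pi_t^\rho$ has random, possibly non-full support, and, in the detectable but non-observable case, convert the strict contractivity of $A$ on the unobservable modes into an \emph{exact} finite-horizon vanishing of $\clV_0^\rho(Y_0)$. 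Since ergodicity and observability each imply detectability, case (3) is the substantive one, and the ergodic argument above is short precisely because it sidesteps both difficulties.
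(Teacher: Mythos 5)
Your setup and first two cases track the paper closely: the reduction to $\Sscr^\rho$, the identity $\II^\rho(F)=\|F\|_{\mathbb{H}_\tau^\rho}^2-\|\LL_0F\|_{L^2(\rho)}^2$, compactness of $\LL_0$ by finite rank (a point the paper leaves implicit), the contradiction via \Lemma{lm:minimizer}, and, for the observable case, the propagation of $\clV_t^\rho(g,Y_t)\equiv 0$ from the seed $g=\ones$ through $g\mapsto\clA g$ and $g\mapsto hg$ is precisely the paper's Lemma~\ref{Lem:obsvbl} (the It\^o identity you flag as an obstacle is Eq.~\eqref{eq:momentum}, which the paper imports from a companion work). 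One repair is needed in your ergodic case: under Defn.~\ref{def:ergodic}, ergodicity means $S_0=\text{span}\{\ones\}$ (a single recurrent class), not irreducibility, so your claim that the law of $X_t$ has full support can fail when $\rho$ does not charge transient states; the fix is the paper's, namely work on the forward-invariant set $\bS'=\text{supp}(\rho_t)$, conclude $Y_t$ is constant there, and note $\text{supp}(\rho)\subset\bS'$.

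The genuine gap is case 3 (detectable), which you rightly identify as the substantive case (it is implied by the other two) but do not close. Your proposed route --- splitting $Y_0$ into harmonic and strictly contracting modes and converting contractivity of $A$ into exact finite-horizon vanishing --- is not needed, and as you note you cannot complete it. The missing observation is algebraic and pointwise in $t$, not asymptotic: on a finite state space $(\Gamma f)(x)=\sum_{y}A(x,y)\,(f(y)-f(x))^2$, so the condition $\Gamma Y_t=0$ on $\bS'$, which you already derived, forces $Y_t(y)=Y_t(x)$ whenever $A(x,y)>0$, hence $AY_t=0$; that is, $Y_t\in S_0$ for a.e.\ $t$, $\sP^\rho$-a.s. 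Detectability ($S_0\subset\clO$) then places the random function $Y_t$ inside the deterministic subspace $\clO$. Now reuse your observable-case conclusion $\clV_t^\rho(f,Y_t)=0$ for every deterministic $f\in\clO$: expanding $Y_t=\sum_i c_i(t,\omega)f_i$ in a fixed basis $\{f_i\}$ of $\clO$ with $\clZ_t$-measurable coefficients, bilinearity gives $\clV_t^\rho(Y_t)=\sum_i c_i(t,\omega)\,\clV_t^\rho(f_i,Y_t)=0$, and then $\var^\rho(Y_0(X_0))\le\E^\rho\big(\clV_t^\rho(Y_t)\big)=0$ by \eqref{eq:estimator-t-variance}. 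In short, the detectable case is a two-line combination of the two cases you did prove, not a new contraction argument.
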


\begin{proof}
See Appendix~\ref{apdx:pf-main-results-1}.
\end{proof}

\medskip

The converse of this result -- which  gives the tightest condition
for $c^\rho$ to be positive -- is as follows:

\medskip

\begin{proposition}[\textbf{Answer to Q3 in \Sec{sec:intro}}]\label{prop:necessity}
Consider the HMM $(A,H)$ on finite state-space. If $c^\rho >0$ for all $\rho\in
\clP(\bS) \setminus \clN$, then the HMM is detectable.
\end{proposition}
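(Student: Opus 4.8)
The plan is to prove the contrapositive: assuming the HMM $(A,H)$ is \emph{not} detectable, I will exhibit a prior $\rho\in\clP(\bS)\setminus\clN$ for which $c^\rho=0$, equivalently $\beta^\rho=0$. The algebraic input comes from dualizing the observable space. Write $\clO^\perp:=\{w\in\Re^d: w^\tp g=0\ \forall g\in\clO\}$. Since $\clO$ is invariant under $g\mapsto Ag$ and under $g\mapsto \mathrm{diag}(H^j)g$, the annihilator $\clO^\perp$ is invariant under the adjoints $A^\tp$ and $\mathrm{diag}(H^j)$. Non-detectability $S_0\not\subset\clO$ dualizes to $\clO^\perp\not\subset\img(A^\tp)$. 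Because $\clO^\perp$ is $A^\tp$-invariant it splits along the $A^\tp$-invariant primary decomposition $\Re^d=\ker A^\tp\oplus\img(A^\tp)$ (the zero eigenvalue of a finite Markov generator is semisimple), so there is a nonzero $w_0\in\clO^\perp\cap\ker A^\tp$.

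Next I build $\rho$ and a candidate terminal condition from $w_0$. Since $\ker A^\tp$ is spanned by the invariant measures $\bmu_1,\dots,\bmu_k$ of the closed communicating classes $C_1,\dots,C_k$, write $w_0=\sum_i\alpha_i\bmu_i$; the constraint $w_0\in\clO^\perp$ together with $\ones\in\clO$ forces $\sum_i\alpha_i=0$, so $w_0$ has at least one positive and one negative coefficient. Grouping the classes by the sign of $\alpha_i$ and normalizing yields two stationary probability measures $\nu_+,\nu_-$ with disjoint supports and $\nu_+-\nu_-\propto w_0\in\clO^\perp\cap\ker A^\tp$. I set $\rho=\tfrac12(\nu_++\nu_-)$ and let $f_0\in S_0$ be the harmonic function ($\clA f_0=0$) equal to $+1$ on the $\nu_+$-classes and $-1$ on the $\nu_-$-classes; note $f_0\notin\clO$ since $w_0^\tp f_0\neq 0$. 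The candidate minimizer for $\beta^\rho$ is the deterministic $F=f_0$, for which the natural guess for the solution of the dual optimal control system~\eqref{eq:optimal_control_system} is $(Y_t,V_t)\equiv(f_0,0)$.

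Substituting $(f_0,0)$ into~\eqref{eq:optimal_control_system} and using $\clA f_0=0$ shows it is indeed the solution \emph{provided} $\clV_t^\rho(h,f_0)=0$, and then all three integrands of $\II^\rho(f_0)$ vanish. The carr\'e-du-champ term is immediate: $\rho$ sits on the (invariant) recurrent states, where $f_0$ is constant on each class, so $\pi_t^\rho(\Gamma f_0)=0$. The crux — and the step I expect to be the main obstacle — is the observation covariance $\clV_t^\rho(h,f_0)$. I would establish the equivalent statement that the filter never updates $f_0$, namely $\pi_t^\rho(f_0)=0$ a.s.\ for all $t$; then~\eqref{eq:Kushner} with $\clA f_0=0$ forces $\clV_t^\rho(h,f_0)=0$. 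As the classes are closed, $f_0(X_t)=f_0(X_0)$ is conserved, so $\pi_t^\rho(f_0)=2\,\P^\rho(X_0\in\nu_+\text{-classes}\mid\clZ_t)-1$. By Bayes' rule this posterior weight equals $\tfrac12\sigma_t^{\nu_+}(\ones)\big/\big(\tfrac12\sigma_t^{\nu_+}(\ones)+\tfrac12\sigma_t^{\nu_-}(\ones)\big)$, and the two unnormalized likelihoods coincide pathwise because $\sigma_t^{\nu_+}-\sigma_t^{\nu_-}=\Phi_t(\nu_+-\nu_-)$ remains in $\clO^\perp$ under the linear (Zakai) flow $\Phi_t$ generated by $A^\tp$ and $\mathrm{diag}(H^j)$, while $\ones\in\clO$. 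Hence the weight is $\tfrac12$ and $\pi_t^\rho(f_0)=0$.

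With $\clV_t^\rho(h,f_0)=0$ and $\pi_t^\rho(\Gamma f_0)=0$ in hand, $(f_0,0)$ solves~\eqref{eq:optimal_control_system}, so $\II^\rho(f_0)=0$, while $\var^\rho(Y_0(X_0))=\var^\rho(f_0(X_0))=1>0$; thus $\rho\notin\clN$, and normalizing $f_0$ gives a competitor with unit variance and zero energy, so $\beta^\rho=0$ and $c^\rho=0$. This contradicts $c^\rho>0$ for all $\rho\in\clP(\bS)\setminus\clN$, proving detectability. The two delicate points are the claim that $\clO^\perp$ respects the primary decomposition and, above all, the pathwise identity $\sigma_t^{\nu_+}(\ones)=\sigma_t^{\nu_-}(\ones)$, which is precisely where algebraic unobservability (the direction $w_0\in\clO^\perp$) becomes the filter's inability to resolve the conserved, non-decaying mode $f_0$ (the direction $w_0\in\ker A^\tp$) — the two ingredients that together constitute non-detectability.
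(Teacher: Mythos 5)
Your proof is correct, and although it follows the same contrapositive skeleton as the paper's Appendix~\ref{apdx:pf-main-results-2} --- both exhibit, for a non-detectable HMM, a prior $\rho\in\clP(\bS)\setminus\clN$ given by an equal mixture of invariant measures together with a $\pm 1$-valued harmonic witness $f_0\in S_0$, for which $(Y_t,V_t)\equiv(f_0,0)$ solves the dual system with $\II^\rho(f_0)=0$ and unit variance --- the two key steps are carried out by genuinely different, dual arguments. For the construction of $(\rho,f_0)$, the paper reduces w.l.o.g.\ to exactly two ergodic classes and applies rank--nullity to $\clA|_{\clO}$ with the splitting $\clO=\clA(\clO)\oplus\text{span}\{\ones\}$, whereas you dualize: $\clO^\perp$ is invariant under $A^\tp$ and $\mathrm{diag}(H^j)$, non-detectability reads $\clO^\perp\not\subset\img(A^\tp)$, and semisimplicity of the zero eigenvalue lets the $A^\tp$-invariant subspace $\clO^\perp$ split along $\ker A^\tp\oplus\img(A^\tp)$, yielding $w_0\in\clO^\perp\cap\ker A^\tp$ and, via its signed decomposition, the measures $\nu_\pm$; note this reproduces the paper's Claim~1 exactly, since $\rho(f_0 g)=\half(\nu_+-\nu_-)(g)\propto w_0^\tp g=0$ for every $g\in\clO$, while avoiding the two-class reduction and accommodating transient states through the harmonic extension. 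For the dynamic step, the paper's Claim~2 shows $\pi_t^\rho(fg)\equiv 0$ for all $g\in\clO$ by observing that the Kushner equations~\eqref{eq:Kushner} for this family form a closed system admitting zero as an equilibrium, which delivers $\pi_t^\rho(f)=0$ and $\clV_t^\rho(h,f)=\pi_t^\rho(hf)=0$ simultaneously (take $g=\ones$ and $g=h^j$); you instead use pathwise linearity of the unnormalized (Zakai) flow and invariance of $\clO^\perp$ under that flow to conclude $\sigma_t^{\nu_+}(\ones)=\sigma_t^{\nu_-}(\ones)$, hence $\pi_t^\rho(f_0)\equiv 0$ by Bayes' rule, and then recover $\clV_t^\rho(h,f_0)=0$ from the vanishing quadratic variation of the innovations integral. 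What each buys: your dual route is algebraically cleaner and makes the probabilistic mechanism transparent --- the observation path literally cannot distinguish $\nu_+$ from $\nu_-$ because their difference evolves inside $\clO^\perp$ --- while the paper's primal route stays entirely within the Kushner/filter formalism it has already set up and obtains the covariance condition $\clV_t^\rho(h,f)=0$ directly rather than through a quadratic-variation detour.
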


\begin{proof}
See Appendix~\ref{apdx:pf-main-results-2}. 
\end{proof}

\section{Numerical example}\label{sec:numerics}

Consider an HMM on $\bS = \{1,2,3,4\}$ with the transition rate matrix given by
\[
A(\epsilon) = \begin{pmatrix}
	-1 & 1 & 0 & 0 \\ 2 & -2 & 0 & 0 \\ 0 & 0 & -1 & 1 \\ 0 & 0 & 2 & -2
\end{pmatrix}+ \epsilon \begin{pmatrix}
	0 & 0 & 0 & 0 \\ 0 & -1 & 1 & 0 \\ 0 & 1 & -1 & 0 \\ 0 & 0 & 0 & 0
\end{pmatrix}
\]
%\[
%A = \begin{pmatrix}
%	-1 & 1 & 0 & 0 \\ 2 & -2 & 0 & 0 \\ 0 & 0 & -1 & 1 \\ 0 & 0 & 2 & -2
%\end{pmatrix},\quad h = \begin{pmatrix}
%h_1 \\ 0 \\ h_2 \\ 0
%\end{pmatrix}
%\]
There are two cases:
\begin{enumerate}
\item \textbf{Case 1:} $\epsilon = 0$. The Markov process is not ergodic.  The space of null eigenfunctions is given by,
\[
S_0 = N(A)= \text{span}\left\{ \begin{pmatrix} 1\\ 1 \\ 0 \\ 0
	\end{pmatrix}, \begin{pmatrix} 0\\ 0 \\ 1 \\ 1
	\end{pmatrix} \right\}
\]
This shows that the subsets $\{1,2\}$ and $\{3,4\}$ are the two communicating classes. 
\item \textbf{Case 2:} $\epsilon > 0$. The Markov process is ergodic with only a single communicating class given by $\bS$. 
\end{enumerate}
Consider three choices for the observation function:
\[
h^1 = \begin{pmatrix} 2\\ 0 \\ 2 \\ 0
	\end{pmatrix},\quad h^2 = \begin{pmatrix} 2\\ 0 \\ 0 \\ 0
	\end{pmatrix},\quad h^3 = \begin{pmatrix} 2\\ 0 \\ -2 \\ 0
	\end{pmatrix}
\]
With $\epsilon>0$, the HMM is detectable for any of these choices.  Therefore, the interesting case arises when $\epsilon=0$.  In this case, the following sub-cases arise:
\begin{enumerate}
\item \textbf{Case 1.1:} $\epsilon=0$ and $H=h^1$. The system $(A,H)$ is not detectable. %, because the observation does not provide any information if $X_t$ is in $\{1,2\}$ or in $\{3,4\}$.
\item \textbf{Case 1.2:} $\epsilon=0$ and $H=h^2$. The system $(A,H)$ is not observable, but it is detectable. 
\item \textbf{Case 1.3:} $\epsilon=0$ and $H=h^3$. The system $(A,H)$ is observable.
\end{enumerate}

%If $\epsilon = 0$ and $h(1) = h(3) \neq 0$ (case $h_1$), then the system becomes not observable, as those two communicating classes are indistinguishable. If $h(1) \neq h(3)$ and both values are non-zero (case $h_3$), then the system becomes observable. Meanwhile, if one of the value is zero, the system becomes not observable, but still detectable as far as $h(1)\neq h(3)$ (case $h_2$).

\begin{table}\label{tb:parameters}
	\caption{Combinations of simulation parameters}\label{tab:counterexample}
	\centering
	\begin{tabular}{c|c||c|c}
		\hline
		$\epsilon$ & $h$ & Model property & Rate of conv.\\
		\hline
		$0$ & $h^1$ & Not detectable & $0$ \\
		$0$ & $h^2$ & Non-ergodic but detectable & $0.075$ \\
		$0$ & $h^3$ & Observable & $0.155$\\
%		\hline
		$0.1$ & $h^1$ & Ergodic with $h(1)=h(3)$ & $0.196$ \\
		$0.1$ & $h^3$ & Ergodic with $h(1)\neq h(3)$ & $0.412$ \\
		\hline
	\end{tabular}
\end{table}

\begin{figure}
	\includegraphics[width=0.99\linewidth]{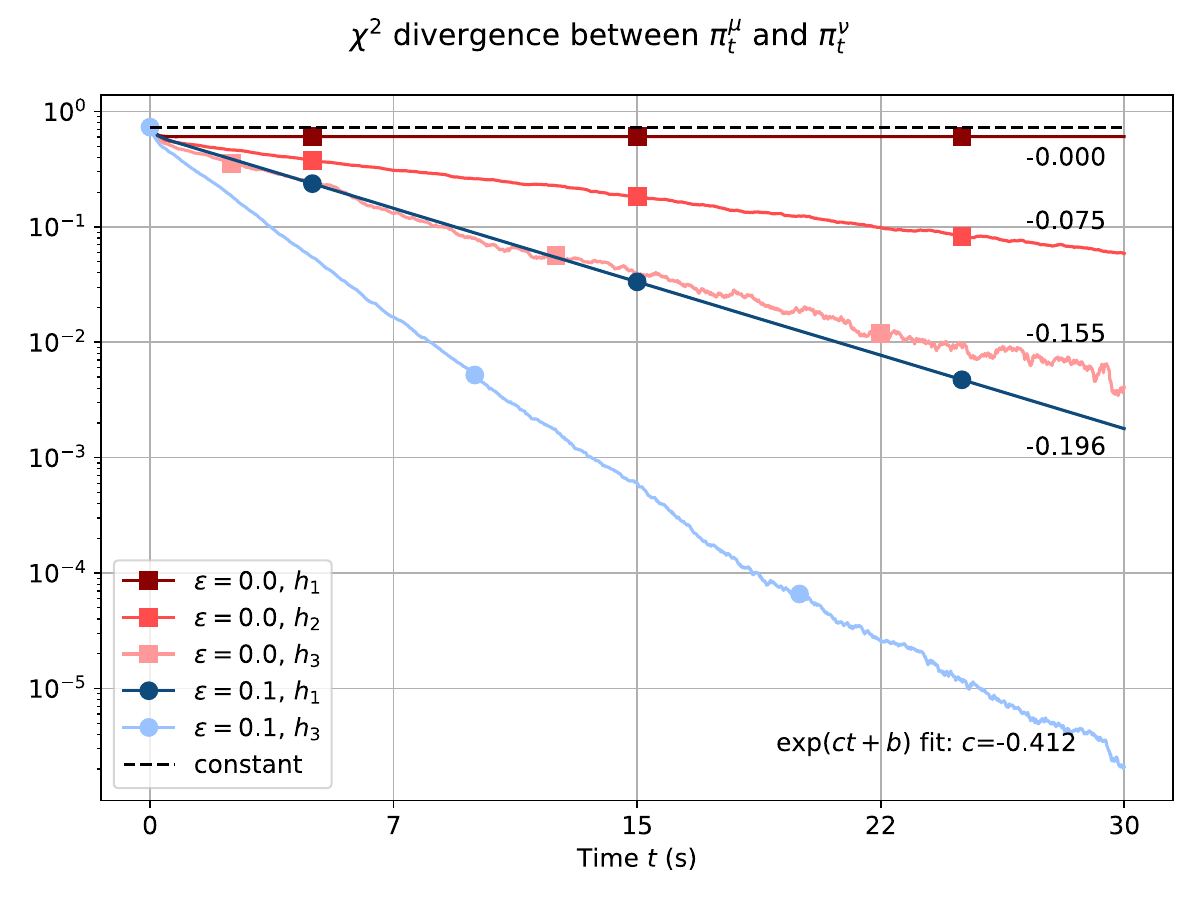}
	\caption{$\E\big(\chisq(\pi_t^\mu\mid\pi_t^\nu)\big)$ for the model with different values of $\epsilon$ and observation function. The number on each line shows the exponential rate obtained from linear fitting.} \label{fig:numerics}
\end{figure}

Fig.~\ref{fig:numerics} depicts the expected value of the $\chisq$-divergence between $\pi_t^\mu$ and $\pi_t^\nu$ from $\mu = [0.25, 0.40, 0.30, 0.05]$ and $\nu = [0.1, 0.2, 0.3, 0.4]$.
The parameters together with the estimated rate of convergence are summarized in Table~\ref{tb:parameters}. An Euler discretization with step-size $0.005$ is used to simulate the HMM and the nonlinear filter. The expectation is approximated by averaging over 500 Monte-Carlo simulations.

\bibliographystyle{IEEEtran}
\bibliography{_master_bib_jin,jin_papers}

% Generated by IEEEtran.bst, version: 1.14 (2015/08/26)
\begin{thebibliography}{10}
\providecommand{\url}[1]{#1}
\csname url@samestyle\endcsname
\providecommand{\newblock}{\relax}
\providecommand{\bibinfo}[2]{#2}
\providecommand{\BIBentrySTDinterwordspacing}{\spaceskip=0pt\relax}
\providecommand{\BIBentryALTinterwordstretchfactor}{4}
\providecommand{\BIBentryALTinterwordspacing}{\spaceskip=\fontdimen2\font plus
\BIBentryALTinterwordstretchfactor\fontdimen3\font minus
  \fontdimen4\font\relax}
\providecommand{\BIBforeignlanguage}[2]{{%
\expandafter\ifx\csname l@#1\endcsname\relax
\typeout{** WARNING: IEEEtran.bst: No hyphenation pattern has been}%
\typeout{** loaded for the language `#1'. Using the pattern for}%
\typeout{** the default language instead.}%
\else
\language=\csname l@#1\endcsname
\fi
#2}}
\providecommand{\BIBdecl}{\relax}
\BIBdecl

\bibitem{bakry2013analysis}
D.~Bakry, I.~Gentil, and M.~Ledoux, \emph{Analysis and geometry of {M}arkov
  diffusion operators}.\hskip 1em plus 0.5em minus 0.4em\relax Springer Science
  \& Business Media, 2013, vol. 348.

\bibitem{duality_jrnl_paper_I}
J.~W. Kim and P.~G. Mehta, ``Duality for nonlinear filtering {I}:
  Observability,'' \emph{IEEE Transactions on Automatic Control}, vol.~69,
  no.~2, pp. 699--711, 2024.

\bibitem{duality_jrnl_paper_II}
------, ``Duality for nonlinear filtering {II}: Optimal control,'' \emph{IEEE
  Transactions on Automatic Control}, vol.~69, no.~2, pp. 712--725, 2024.

\bibitem{JinPhDthesis}
J.~W. Kim, ``Duality for nonlinear filtiering,'' Ph.D. dissertation, University
  of Illinois at Urbana-Champaign, Urbana, 06 2022.

\bibitem{chigansky2009intrinsic}
P.~Chigansky, R.~Liptser, and R.~Van~Handel, ``Intrinsic methods in filter
  stability,'' in \emph{Handbook of Nonlinear Filtering}, D.~Crisan and
  B.~Rozovskii, Eds.\hskip 1em plus 0.5em minus 0.4em\relax Oxford University
  Press, 2009.

\bibitem{van2009observability}
R.~van Handel, ``Observability and nonlinear filtering,'' \emph{Probability
  Theory and Related Fields}, vol. 145, no. 1-2, pp. 35--74, 2009.

\bibitem{clark1999relative}
J.~M.~C. Clark, D.~L. Ocone, and C.~Coumarbatch, ``Relative entropy and error
  bounds for filtering of {M}arkov processes,'' \emph{Mathematics of Control,
  Signals and Systems}, vol.~12, no.~4, pp. 346--360, 1999.

\bibitem{van2010nonlinear}
R.~van Handel, ``Nonlinear filtering and systems theory,'' in \emph{Proceedings
  of the 19th International Symposium on Mathematical Theory of Networks and
  Systems}, 2010.

\bibitem{budhiraja2003asymptotic}
A.~Budhiraja, ``Asymptotic stability, ergodicity and other asymptotic
  properties of the nonlinear filter,'' in \emph{Annales de l'IHP
  Probabilit{\'e}s et Statistiques}, vol.~39, no.~6, 2003, pp. 919--941.

\bibitem{kim2021ergodic}
J.~W. Kim, P.~G. Mehta, and S.~Meyn, ``The conditional {P}oincar{\'e}
  inequality for filter stability,'' in \emph{2021 IEEE 60th Conference on
  Decision and Control (CDC)}, 12 2021, pp. 1629--1636.

\bibitem{van2006filtering}
R.~van Handel, ``Filtering, stability, and robustness,'' Ph.D. dissertation,
  California Institute of Technology, Pasadena, 12 2006.

\bibitem{kim2019duality}
J.~W. Kim, P.~G. Mehta, and S.~Meyn, ``What is the {Lagrangian} for nonlinear
  filtering?'' in \emph{2019 IEEE 58th Conference on Decision and Control
  (CDC)}.\hskip 1em plus 0.5em minus 0.4em\relax Nice, France: IEEE, 12 2019,
  pp. 1607--1614.

\bibitem{kim2024variance}
\BIBentryALTinterwordspacing
J.~W. Kim and P.~G. Mehta, ``Variance decay property for filter stability,''
  \emph{IEEE Transactions on Automatic Control}, To appear. [Online].
  Available: \url{https://doi.org/10.1109/TAC.2024.3413573}
\BIBentrySTDinterwordspacing

\end{thebibliography}

\appendix

\section{Appendix}

\subsection{Dissipation equation for a Markov process}\label{apdx:diss_MP}

The semigroup $\{P_t: t\ge 0\}$ is a solution of the 
Kolmogorov equation, $
\frac{\partial}{\partial t} (P_tf) = \clA(P_t f)
$ for $t\geq 0$. Therefore,
\begin{align*}
  \frac{\ud}{\ud t}\clV^\bmu(P_tf)
  %&= \bmu\big(2(P_tf)(\frac{\partial}{\partial t} P_tf)\big)-2\bmu(P_tf)\bmu\big(\frac{\partial}{\partial t} P_tf\big)\\
	&=2\bmu\big((P_tf)(\clA (P_tf))\big) - 2\bmu(P_tf)\bmu\big(\clA(P_tf)\big)\\
	&=-\bmu(\Gamma(P_tf)) % = -\clE^\bmu(P_t f)
\end{align*}
where $\bmu\big(\clA(P_tf)\big) = 0$ because $\bmu$ is an invariant measure.   

\subsection{Proof of Proposition~\ref{prop:stronger_jensen_implies_filter_stability}}\label{apdx:pf-jensen-and-stability}

Using inequalities~\eqref{eq:chisq_identity_intro} and~\eqref{eq:jensens_ineq_stronger}, 
\[
|\E^\mu\big(\chi^2(\pi_T^\mu\mid\pi_T^\nu)\big)|^2\le
 e^{-cT} \var^\nu\big(\gamma_T(X_T)\big) \chisq(\mu | \nu) 
\]
Since $\var^\nu\big(\gamma_T(X_T)\big) = \E^\nu\big(|\gamma_T(X_T)-1|^2\big) = \E^\nu\big(\chi^2(\pi_T^\mu\mid\pi_T^\nu)\big)$, 
\[
\E^\mu\big(\chi^2(\pi_T^\mu\mid\pi_T^\nu)\big)\le \frac{1}{R_T}e^{-cT} \chisq(\mu | \nu) 
\]
where 
$
R_T := \frac{\E^\mu\big(\chi^2(\pi_T^\mu\mid\pi_T^\nu)\big)}{\E^\nu\big(\chi^2(\pi_T^\mu\mid\pi_T^\nu)\big)} \geq \text{essinf}_{x\in\bS} \frac{\ud \mu}{\ud \nu}(x) = \underline{a}
$.

%is introduced. Finally, we use a simple lower-bound $R_T \ge \underline{a}$ to complete the proof.

\subsection{Proof of Proposition~\ref{prop:backward-map-and-bsde}}\label{apdx:pf-prop-bmap-bsde}

Apply It\^o formula on $Y_t(X_t)$ to obtain
\begin{align*}
	\ud Y_t(X_t) &= V_t^\tp(X_t)\ud W_t + \ud N_t
\end{align*}
where $\{N_t:t\ge 0\}$ is a martingale~\cite[Remark~1]{duality_jrnl_paper_II}. Integrating,
\begin{equation}\label{eq:ytxt}
	\gamma_T(X_T) = Y_t(X_t) + \int_t^T V_s^\tp(X_s)\ud W_s +\ud N_t
\end{equation}
and therefore
\[
Y_t(x) = \E^\nu\big(\gamma_T(X_T)\mid \clZ_t \vee [X_t = x]\big),\quad x\in\bS
\]
In particular at time $t=0$, we have $Y_0(x) = y_0(x)$. The variance of $Y_t(X_t)$ is also obtained from~\eqref{eq:ytxt}:
\begin{align*}
	\E^\nu\big(|&\gamma_T(X_T)-1|^2\big)\\
	&= \E^\nu\Big(|Y_t(X_t)-1|^2 + \int_t^T |V_s(X_s)|^2 + (\Gamma Y_s)(X_s)\ud s\Big)\\
	&=\var^\nu(Y_t(X_t)) + \E^\nu\Big(\int_t^T \pi_s^\nu(\Gamma Y_s) + \pi_s^\nu(|V_s|^2)\ud s\Big)
\end{align*}
Upon differentiating both sides with $t$ gives~\eqref{eq:var_contractive_ext}.

\subsection{Proof of Lemma~\ref{lm:minimizer}}\label{apdx:pf-lemma-minimizer}

\newP{Discussions on the map $\LL_0$ and $\II^\rho$} For $F\in \mathbb{H}_\tau^\rho$, note that
\[
\|F\|_{\mathbb{H}_\tau^\rho}^2 = \tE^\rho\big(\sigma_\tau^\rho(F^2)\big)= \E^\rho\big(\pi_\tau^\rho(F^2)\big) = \E^\rho\big(F(X_\tau)^2\big)
\]
For $F\in \clS^\rho$, $\pi_\tau(F) = 0$ and 
$\therefore, \;
\E^\rho\big((F(X_\tau))^2\big) = \E^\rho\big(\clV_\tau^\rho(F)\big)
$. 
Eq.~\eqref{eq:estimator-t-variance} in Prop.~\ref{prop:bsde-general-F} is thus expressed as
\begin{equation}\label{eq:norm-equation}
\|F\|_{\mathbb{H}_\tau^\rho}^2 = \rho(Y_0^2) + \II^\rho(F),\quad F\in\clS^\rho
\end{equation}
This shows that $\LL_0:\mathbb{H}_\tau^\rho\to L^2(\rho)$ is bounded with $\|\LL_0\|\le 1$.

To obtain the minimizer, setting $(\tilde Y,\tilde V)$ to be the solution to~\eqref{eq:optimal_control_system_on_S} with $Y_\tau = \tilde F\in {\cal
	S}^\rho $, the functional derivative is evaluated as
follows:
\[
\langle \nabla \II^\rho(F) , \tilde F \rangle := 2\;
\E^\rho \left( \int_0^\tau \pi_t^\rho(\Gamma(Y_t,\tilde Y_t) +
\pi_t^\rho( V_t^\tp \tilde V_t) 
\ud t \right)
\]
where note $\pi_t^\rho(V_t^\tp \tilde{V}_t) := \int_\bS V_t^\tp(x) \tilde{V}_t(x) \ud \pi_t^\rho(x) $. Using Cauchy-Schwarz  and~\eqref{eq:norm-equation},
\begin{equation}\label{eq:bdd_lin_fn}
	|\langle \nabla \II^\rho(F) , \tilde F \rangle|^2 \leq 4
	%\II^\rho (F) 
	\|F\|_{\mathbb{H}_\tau^\rho}^2
	\; \|\tilde F\|_{\mathbb{H}_\tau^\rho}^2
\end{equation}
This shows that $\tilde F \mapsto \langle \nabla
\II^\rho(F) , \tilde F\rangle$ is a bounded linear functional as a map
from 
${\cal S}^\rho \subset \mathbb{H}_\tau^\rho$ into $\Re$. 
%where we used $\tau \II(\tilde F) \leq \E^\rho (|\tilde F(X_\tau)|^2)$.
With these formalities completed, we show a minimizer exists.

%$\Gamma(f,g) = \clA (fg)-f(\clA g)-g(\clA f)$

\newP{Proof of Lemma~\ref{lm:minimizer}} Let $\beta^\rho$ be the infimum.  Consider a sequence $\{F^{(n)}\in \clS^\rho:n=1,2,\ldots\}$ such that $\II^\rho(F^{(n)})\downarrow \beta^\rho$ and $\rho\big((Y_0^{(n)})^2\big)=1$ for each $n$, with $Y_0^{(n)}:=\LL_0(F^{(n)})$. Using~\eqref{eq:norm-equation},
\[
\|F^{(n)}\|_{\mathbb{H}_\tau^\rho}^2 = 1 + \II^\rho(F^{(n)}) < C,\quad n=1,2,\ldots
\]
Therefore, $F^{(n)}$ is a bounded sequence in the Hilbert space $\mathbb{H}_\tau^\rho$, and there exists a weak limit $F\in\mathbb{H}_\tau^\rho$ such that $F^{(n)}\rightharpoonup F$. Moreover, $F\in\clS^\rho$ because $\clS^\rho$ is closed. Therefore $Y_0:=\LL_0(F)$ satisfies $\rho(Y_0)=0$.
Since $F^{(n)} \rightharpoonup F$ in $\mathbb{H}_\tau^\rho$ and $\LL_0$ is compact, we have $Y_0^{(n)}\to Y_0$ in $L^2(\rho)$. Therefore, $\rho\big((Y_0)^2\big) = \lim_{n\to \infty}\rho\big((Y_0^{(n)})^2\big) = 1$.

It remains to show that $\II^\rho(F) = \beta^\rho$.
The map $F\mapsto \II^\rho(F)$ is convex. Therefore,
\[
\II^\rho(F^{(n)})\ge \II^\rho(F) + \langle\nabla \II^\rho(F),F^{(n)}-F\rangle
\]
We have already shown that $\tilde F \mapsto \langle \nabla
\II^\rho(F) , \tilde F\rangle$ is a bounded linear functional.
Therefore, letting $n\to \infty$, the second term on the right-hand
side converges to zero and 
\[
\II^\rho(F) \le \lim_{n\to \infty} \II^\rho(F^{(n)}) = \beta^\rho
\]
Because $\beta^\rho$ is the infimum, this show that $\II^\rho(F) = \beta^\rho$.

%\newP{Proof of Lemma~\ref{lm:minimizer} (continuity)} 

\subsection{Proof of Proposition~\ref{prop:sufficiency}}\label{apdx:pf-main-results-1}

\begin{proof}[of Prop.~\ref{prop:sufficiency}]
Suppose any one of the three conditions hold. We claim then
\[
\text{(claim)} \quad \II^\rho(F) = 0 \implies \VV = 0
\]
Supposing the claim is true, the proof is by contradiction.  To see this suppose  $\beta^\rho=0$. By
Lemma~\ref{lm:minimizer}, there exists $\II^\rho(F) = 0$ such that $\VV
=1$ which contradicts the claim.  It remains to prove the claim.  For
the three cases, its proof is described in the remainder of this appendix. 
\end{proof}

\newP{1. Ergodic case}  At time $t>0$, let $\rho_t$ denote the probability law of $X_t$ (without conditioning). Then $\rho\ll\rho_t$ and $\text{supp}(\rho_t) =:\bS'$ is identical for any $t>0$. W.l.o.g., take $\bS'$ as the new state-space and consider the Markov process on $\bS'$.\\
Suppose $\II^\rho(F) = 0$. Then
\[
\E^\rho \Big( \int_0^\tau \pi_t^\rho\big(\Gamma Y_t\big) \ud t \Big) =0 
	\implies
	\pi_t^\rho\big(\Gamma Y_t\big) =0
\]
almost every $t\in[0,\tau]$, $\sP^\rho|_{\clZ_\tau}$-almost
surely. For white noise observation model, $\text{supp}(\pi_t^\rho)
=:\bS'$ and therefore $\Gamma Y_t(x) = 0$ for all $x\in\bS'$, and
therefore $\Gamma Y_t = 0$ with probability 1. If the model is
ergodic, this implies $Y_t$ is a constant function, and therefore
$\E^\rho\big(\clV_t^\rho(Y_t)\big) = 0$. The proof of the claim is completed by noting $\var^\rho(Y_0(X_0))\le \E^\rho(\clV_t^\rho(Y_t))$ from~\eqref{eq:estimator-t-variance}.

\newP{2. Observable case} The proof is based on using the equation for conditional covariance $\clV_t^\rho(f,Y_t)$ (see~\cite[Appdx.~D]{kim2024variance} for its proof):
\begin{align}
	&	\ud \clV_t^\rho (f,Y_t) = \Big(\pi_t^\rho\big(\Gamma(f,Y_t)\big)+\clV_t^\rho(\clA f,Y_t)\Big)\ud t  \label{eq:momentum}\\
	&\quad +\Big(\clV_t^\rho\big((f-\pi_t^\rho(f))(h-\pi_t^\rho(f)),Y_t\big)+\clV_t^\rho(f,V_t)\Big)^\tp \ud I_t^\rho \nonumber
\end{align}
%This equation is derived from the equation for the co-state (momentum) in the dual optimal control formulation of the nonlinear filter~\cite[Thm.~2]{duality_jrnl_paper_II}.  
The equation is used to prove the
following Lemma which is the key to prove the claim. 

\medskip

\begin{lemma}\label{Lem:obsvbl}
	Suppose $\II^\rho(F) = 0$.  Then for each $ f\in\clO$,
	\[
	\clV_t^\rho(f,Y_t) = 0,  \quad \sP^\rho\text{-a.s.},\;\; \text{a.e.} \; 0\leq t\leq \tau
	\]
\end{lemma}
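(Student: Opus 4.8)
The plan is to prove the lemma by induction on the construction of the observable space $\clO$ (Defn.~\ref{def:obsvbl}), exploiting the linearity of $f\mapsto \clV_t^\rho(f,Y_t)$ in its first argument so that it suffices to treat the generating functions. Before the induction I would extract three pointwise consequences of the hypothesis $\II^\rho(F)=0$. Since each of the three integrands in the energy $\II^\rho(F)$ (Defn.~\ref{def:PI_HMM}) is non-negative — $\pi_t^\rho(\Gamma Y_t)\ge 0$ because $\Gamma Y_t\ge 0$, while $|\clV_t^\rho(h,Y_t)|^2$ and $\clV_t^\rho(V_t)$ are manifestly non-negative — each must vanish for a.e.~$t$, $\sP^\rho$-a.s.:
\[
\pi_t^\rho(\Gamma Y_t)=0,\qquad \clV_t^\rho(h,Y_t)=0,\qquad \clV_t^\rho(V_t)=0 .
\]
From $\pi_t^\rho(\Gamma Y_t)=0$ and $\Gamma Y_t\ge 0$ I obtain $(\Gamma Y_t)(x)=0$ on $\text{supp}(\pi_t^\rho)$; the pointwise Cauchy--Schwarz inequality for the positive semidefinite carr\'e du champ, $\Gamma(f,Y_t)(x)^2\le (\Gamma f)(x)\,(\Gamma Y_t)(x)$, then forces $\Gamma(f,Y_t)=0$ on the support, hence $\pi_t^\rho\big(\Gamma(f,Y_t)\big)=0$ for every $f$. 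Likewise $\clV_t^\rho(V_t)=0$ means $V_t$ equals its conditional mean $\pi_t^\rho(V_t)$ on the support, so $\clV_t^\rho(f,V_t)=0$ for every $f$.

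For the induction, the base case $f=\ones$ is immediate, since $\ones-\pi_t^\rho(\ones)=0$ gives $\clV_t^\rho(\ones,Y_t)=0$. For the inductive step I assume $\clV_t^\rho(f,Y_t)=0$ for a.e.~$t$, $\sP^\rho$-a.s., and establish the same for $\clA f$ and for each component of $hf$. The key tool is the conditional-covariance SDE~\eqref{eq:momentum}. Because $t\mapsto\clV_t^\rho(f,Y_t)$ is a continuous process that vanishes identically, its quadratic variation and its finite-variation part are both zero; by uniqueness of the semimartingale decomposition (and since the innovation $I^\rho$ is a $\sP^\rho$-martingale, so the $\ud I_t^\rho$-integral is the martingale part), both the drift and the $\ud I_t^\rho$-coefficient in~\eqref{eq:momentum} must vanish for a.e.~$t$, $\sP^\rho$-a.s.

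Setting the drift to zero yields $\clV_t^\rho(\clA f,Y_t) = -\pi_t^\rho\big(\Gamma(f,Y_t)\big) = 0$ by the first preliminary fact, which disposes of the $\clA f$ case. Setting the martingale coefficient to zero yields, componentwise in $h^j$,
\[
\clV_t^\rho\big((f-\pi_t^\rho(f))(h^j-\pi_t^\rho(h^j)),Y_t\big)+\clV_t^\rho(f,V_t^j)=0 .
\]
Expanding the product and using linearity of $\clV_t^\rho(\varble,Y_t)$ together with the vanishing of covariances against constants, the first term collapses to $\clV_t^\rho(fh^j,Y_t)-\pi_t^\rho(h^j)\,\clV_t^\rho(f,Y_t)-\pi_t^\rho(f)\,\clV_t^\rho(h^j,Y_t)$, whose last two pieces drop out by the inductive hypothesis and by $\clV_t^\rho(h,Y_t)=0$; since also $\clV_t^\rho(f,V_t)=0$ by the third preliminary fact, what remains is $\clV_t^\rho(fh^j,Y_t)=0$, i.e.~the $hf$ case. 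By linearity this propagates from $\ones$ through all iterated applications of $\clA$ and $h$, and hence to all of $\clO$, completing the induction.

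The step I expect to be the main obstacle is the passage from ``$\clV_t^\rho(f,Y_t)\equiv 0$ as a process'' to ``both coefficients in~\eqref{eq:momentum} vanish for a.e.~$t$''. This requires continuity of the conditional-covariance process, uniqueness of its semimartingale decomposition, and the martingale property of $I^\rho$ under $\sP^\rho$; some additional care is needed with the support of $\pi_t^\rho$ (which remains confined to a fixed set) so that the pointwise carr\'e du champ Cauchy--Schwarz argument applies uniformly in $t$.
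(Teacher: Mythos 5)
Your proposal is correct and follows essentially the same route as the paper's proof: extract the three vanishing quantities from $\II^\rho(F)=0$, use Cauchy--Schwarz (pointwise for the carr\'e du champ and for the conditional covariance) to get $\pi_t^\rho(\Gamma(f,Y_t))=0$ and $\clV_t^\rho(f,V_t)=0$, then use the conditional-covariance SDE~\eqref{eq:momentum} to propagate $\clV_t^\rho(f,Y_t)=0$ to $\clA f$ and $hf$, starting from $f=\ones$ and inducting through the construction of $\clO$. Your explicit appeal to continuity and uniqueness of the semimartingale decomposition simply makes rigorous a step the paper leaves implicit.
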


\medskip

\begin{proof}
	From the defining relation for $\II^\rho(F)$,
	\[
	\pi_t^\rho (\Gamma Y_t) = 0,\; \clV_t^\rho (h,Y_t) = 0,\;
	\clV_t^\rho (V_t) = 0,\;\;\sP^\rho\text{-a.s.}
	\]
	for $\text{a.e.}\;0\le t\le \tau$.  Using the Cauchy-Schwarz formula
	then for each $ f\in C_b(\bS)$,
	\[
	|\clV_t^\rho (f,V_t)|^2 \leq \clV_t^\rho (f) \clV_t^\rho (V_t) =0 \quad \sP^\rho\text{-a.s.}
	\]
	Similarly, upon using the Cauchy-Schwarz
	formula~\cite[Eq.1.4.3]{bakry2013analysis} for carr\'e du
	champ, 
	$
	\pi_t^\rho (\Gamma(f,Y_t))= 0, \; \sP^\rho\text{-a.s.}
	$. 
	Based on these~\eqref{eq:momentum} simplifies to
	\begin{align*}
		&	\ud \clV_t^\rho (f,Y_t) 
	  \\&\qquad
          =\clV_t^\rho (\clA f,Y_t)\ud t + \big(\clV_t^\rho (hf,Y_t)  -
		\pi_t^\rho (h)\clV_t^\rho (f,Y_t)\big)^\tp \ud I_t^\rho %,\quad 0\le t\le \tau
	\end{align*}
%	\begin{align*}
%	&	\ud \clV_t^\rho (f,Y_t) = \clV_t^\rho (\clA f,Y_t)\ud t \\
%	&\quad	+ \big(\clV_t^\rho (hf,Y_t)  -
%	\pi_t^\rho (h)\clV_t^\rho (f,Y_t)\big)^\tp \ud I_t^\rho,\quad 0\le t\le \tau
%	\end{align*}
	Therefore, $\clV_t^\rho (f,Y_t) = 0, \;\;\;0\leq t\leq \tau$
	\begin{align*}
		%&\\
		&\Longrightarrow \quad \clV_t^\rho (\clA f,Y_t) = 0,\;\clV_t^\rho (hf,Y_t) = 0, \;\;\;0\leq t\leq \tau
	\end{align*}
	%	where we have dropped ``a.e.'' because these are solutions of the SDE. 
	Since $\clV_t^\rho (\ones,Y_t) = 0$ for all $t\in[0,\tau]$, the result follows
	from Defn.~\ref{def:obsvbl} of the observable space $\clO$.
\end{proof}

Based on the result in Lemma~\ref{Lem:obsvbl}, if $\clO = \Re^d$, we have $\E^\rho\big(\clV_t^\rho(Y_t)\big) = 0$, and then the claim follows because $\var^\rho(Y_0(X_0))\le \E^\rho(\clV_t^\rho(Y_t))$ from~\eqref{eq:estimator-t-variance}.

\newP{3. Detectable case} As in the ergodic case, if $\II^\rho(F) = 0$ then $\Gamma Y_t(x) = 0$ for all $x\in\bS'$, and therefore $Y_t \in S_0$. If the system $(\clA,h)$ is detectable, then this implies $Y_t \in \clO$ with probability 1. By Lemma~\ref{Lem:obsvbl}, $\E^\rho\big(\clV_t^\rho(Y_t)\big) = 0$ and the claim follows.

\subsection{Proof of Proposition~\ref{prop:necessity}}\label{apdx:pf-main-results-2}

Suppose HMM is not detectable. Our goal is to find $\rho\in\clP(\bS) \setminus \clN$ and $F\in\mathbb{H}_\tau^\rho$ such that $\II^\rho(F) = 0$ and $\VV = 1$.
We begin with two claims:
\begin{enumerate}
\item \textbf{Claim 1.} There exists a $\rho\in\clP(\bS)$ and $f\in
  S_0$ such that (a) $\rho(f^2) > 0$, and (b) $\rho(fg)=0$ for all $g\in\clO$.
\item \textbf{Claim 2.} For any such $f$ and $\rho$ (that satisfy the two conditions in claim 1),
$
\pi_t^\rho(fg) \equiv 0,\;0\le t\le \tau
$, $\forall\;g\in\clO$.
\end{enumerate}

Assuming these claims are true $\pi_\tau^\rho(f) = 0$ and therefore $f\in
\clS_0$.  Because $f\in S_0$, $\clA f = 0$ and
therefore, $Y_t \equiv f$ and $V_t \equiv 0$ solves the
BSDE~\eqref{eq:optimal_control_system_on_S}, whose energy
$\II^\rho(f) = 0$ and $\VV = \rho(f^2) > 0$.  It remains to prove the two claims.

\newP{Proof of claim 2} Since $f\in S_0$, $\clA f = 0$, and consequently, $\clA(fg) = f\clA g$. From~\eqref{eq:Kushner}:
	\[
	\ud \pi_t^\rho(fg) = \pi_t^\rho\big(f\clA g\big)\ud t + \pi_t^\rho(ghf)^\tp \ud I_t ,\;0\leq t\leq \tau
	\]
	Because $\clA g	\in \clO$ and $gh \in \clO$ (Defn.~\ref{def:obsvbl}) this shows claim 2.  At time $t=0$, $\left. \pi_t^\rho\big(f\clA g\big)\right|_{t=0} = \rho( f \clA g) =0$ and $\left. \pi_t^\rho(ghf)\right|_{t=0} = \rho(fgh) = 0$.  Therefore, $\pi_t^\rho(fg) \equiv 0$ is the equilibrium soln. 

        \newP{Proof of claim 1} Consider an ergodic partition $S=\cup_k S^{(k)}$ and the invariant measures $\bmu^{(k)}$ with support on $S^{(k)}$ for each $k$.  W.l.o.g, upon re-ordering of indices if necessary, there exists an $f = \ones_{S^{(1)}} - \ones_{S^{(2)}}$ such that $f\notin\clO$.  Therefore, w.l.o.g, we may restrict ourselves to ergodic partition with exactly two components, $S=  S^{(1)} \cup  S^{(2)}$, with invariant measures $\bmu^{(1)}$ and $\bmu^{(2)}$ with support on $S^{(1)}$ and $S^{(2)}$, respectively.  For this case, $\dim(N(\clA))=2$ and since $\ones$ is contained in both $N(\clA)$ and in $\clO$, and the fact that $N(\clA)\not\subset \clO$, $\dim(N(\clA) \cap \clO)=1$. Consider the restriction (noting $\clO$ is $\clA$-invariant),
        \[
\left. \clA\right|_{\clO}:\clO \to \clA(\clO)\subset \clO
\]
Then $\dim(N(\left. \clA\right|_{\clO}))=1$. By rank-nullity, $\dim(\clA(\clO))=\dim(\clO)-1$.  Consider a decomposition $\clO = \clA(\clO) \oplus \text{span}\{\bar{o}\}$ for some $\bar{o}\in\clO$. Use the decomposition to express any $g\in \clO$ as
$
g = g_A + a\,\bar{o}
$, 
where $a\in\Re$.  Now, $\bmu^{(1)}(\ones)=\bmu^{(2)}(\ones)=1 \implies \ones\notin\clA(\clO)$ (since, for all $g\in\clA(\clO)$, $\bmu^{(1)}(g)=\bmu^{(2)}(g)=0$). Because $\ones \in \clO$, a possible choice is to pick $\bar{o}=\ones$. 

We now pick $\rho \in \clP(\bS)$ and $f \in S_0$ to show that $\rho(fg)=0$ for all $g\in\clO$. Set
\[
\rho = \half \bmu^{(1)} + \half \bmu^{(2)},\quad f = c_1 \ones_{S^{(1)}} + c_2 \ones_{S^{(2)}}
\]
where the constants $c_1$ and $c_2$ need to be picked. Because $g_A\in R(\clA)$,
$
\rho(fg_A) = \frac{1}{2} (c_1 \bmu^{(1)}(g_A) + c_2 \bmu^{(2)}(g_A)) = 0
$. 
Therefore,
\[
\rho(fg) = \frac{a}{2} (c_1 \bmu^{(1)}(\bar{o}) + c_2 \bmu^{(2)}(\bar{o}))
\]
Pick constants $c_1$ and $c_2$ to make the right-hand side zero. With $\bar{o}=\ones$, $c_1=1$ and $c_2=-1$ works for which $\rho(f)=0$ and $\rho(f^2)=1$.

\end{document}